\newcolumntype{d}{D{.}{.}{-1}}
\newtheorem{remark}[theorem]{Remark}
\title{Structure-Preserving Model Reduction of Forced Hamiltonian Systems\thanks{
The authors acknowledge partial support from the Air Force Office of Scientific Research (AFOSR).}}
\author{Liqian Peng\thanks{Department of Mechanical and
Aerospace Engineering, University of Florida, Gainesville, FL 32611-6250
(liqianpeng@ufl.edu).}
        \and Kamran Mohseni\thanks{Department of Mechanical and
Aerospace Engineering, and Department of Electrical and Computer
Engineering, University of Florida, Gainesville, FL 32611-6250
(mohseni@ufl.edu).}}
\begin{document}

\maketitle
\begin{abstract}
This paper reports a development in the proper symplectic decomposition (PSD) for model reduction of forced Hamiltonian systems.  As an analogy to the  proper orthogonal decomposition (POD), PSD is designed to build a symplectic subspace to fit empirical data.   Our aim is two-fold. First, to achieve computational savings for large-scale Hamiltonian systems with  external forces. Second, to simultaneously preserve the symplectic structure and the forced structure of the original system. We first reformulate d'Alembert's principle in the Hamiltonian form. Corresponding to the integral and local forms of d'Alembert's principle, we propose  two different structure-preserving model reduction approaches to reconstruct low-dimensional systems, based on the variational principle and on the structure-preserving projection, respectively. These two approaches are proven to yield the same reduced system. Moreover, by incorporating the vector field into the data ensemble, we provided several algorithms for energy preservation. In a special case when the external force is described by the Rayleigh dissipative function, the proposed method  automatically preserves the dissipativity, boundedness, and stability of the original system. The stability, accuracy, and efficiency of the proposed method are illustrated through numerical simulations of a dissipative  wave equation.
\end{abstract}

\begin{keywords}
Structure-preserving, forced Hamiltonian systems, proper symplectic decomposition, d'Alembert's principle, variational principle, structure-preserving projection,  dissipativity preservation
\end{keywords}

\begin{AMS}
65P10, 37M15, 34C20, 93A15, 37J25
\end{AMS}

\pagestyle{myheadings} \thispagestyle{plain} \markboth{LIQIAN PENG AND KAMRAN MOHSENI}{STRUCTURE-PRESERVING MODEL REDUCTION}

\section{Introduction}\label{sec:intro}
For several centuries, physical models have been used in the natural sciences to describe and predict the world we live in. Physical models are anchored in venerated physical laws, such as Newton's laws of motion, Hamilton's principle, and conservation laws, to name but a few. Compared with data-based empirical models, physical models are  more comprehensive and interpretable.
In most cases, physical models derived from these laws are simple in the sense that they are typically expressed in terms of a few  elegant equations.
Nevertheless, for many practical problems, physical models become computational expensive, and even intractable, when they have high dimensions.



In recent years, the revolution in data sciences has opened a new window for understanding our would. Rather than discovering new physical laws,  empirical models have led to powerful tools for extracting patterns and trends from the data directly.
Although empirical models  are phenomenological, they are predictive as well. When facing the curse of dimensionality in empirical models,
dimensionality reduction techniques in data sciences provide low-cost solutions by pre-processing the data into a lower-dimensional form.

Can we couple tools from the data sciences--tools that are capable of dealing with high dimensions--with physical models? How can we combine the advantages of physical models with the information contained in the data? \emph{Model reduction} is a technique for reducing the computational complexity of physical models in numerical simulations.
Using empirical data, model reduction can provide low-dimensional models that adequately predict the dynamics and allow for real-time analysis and control. The need for model reduction arises because, in many cases, direct numerical simulations are so computationally intensive that they either cannot be performed as often as needed or are only performed in special circumstances.  See~\cite{SorensenDC:01a} for a survey on the classical model reduction methods.

Among these methods, the proper orthogonal decomposition  (POD, also known as Karhunen-Lo\`{e}ve decomposition or principle component analysis) with Galerkin projection, which was first introduced by Moore~\cite{MooreBC:81a}, has wide applications in many fields of  science and engineering.  As an empirical model reduction technique, the POD-Galerkin method (or POD for short) involves an offline-online splitting methodology.  In the offline stage, empirical data is generated by experiments or direct numerical simulations. A reduced model (or reduced system) is then constructed by projecting the full model to a subspace where empirical data approximately resides.  In the online stage, the reduced model is solved in the low-dimensional subspace.

However, the classical POD method is not guaranteed to yield a stable reduced model in general, even if the full model is stable~\cite{PetzoldLR:03a,PrajnaS:03a, Mohseni:14b}. The instability of a reduced model is often accompanied by blowup of system energy.
Thus, a POD reduced model often fails to represent a physical system even if it is conservative or dissipative. More generally, POD can always yield a reduced model with a significantly lower dimension, but the reduced model might be merely a \emph{numerical model}, rather than a \emph{physical model}, since a POD reduced model may not yield to the underlying physical law that exists in the full model. Our primary motivation in this paper is to develop a model reduction technique such that the reduced model is guaranteed to be physical and as stable as the full model.

In the context of classical mechanics, d'Alembert's principle is the fundamental law of motion. Thus, if a reduced model remains physical, it should respect d'Alembert's principle. In section \ref{sec:forced}, we shall see that forced Hamiltonian equations satisfy d'Alembert's principle; conversely,  if a system satisfies d'Alembert's principle, it  can be represented by a forced Hamiltonian equation when choosing canonical coordinates. Thus, this paper focuses on developing a structure-preserving  model reduction method for forced Hamiltonian systems, where the structure refers to the forced Hamiltonian structure and the systems are represented by ordinary differential equations (ODEs).

The proposed method in this paper extends our previous work on the symplectic model reduction of Hamiltonian systems to the structure-preserving model reduction of forced and dissipative systems. The symplectic model reduction is based on proper symplectic decomposition (PSD)-symplectic projection method~\cite{Mohseni:14b}. Analogous to the POD-Galerkin method, PSD builds a symplectic subspace to fit empirical data, while the symplectic projection constructs a reduced Hamiltonian system on the symplectic subspace. Because the PSD reduced system preserves the symplectic structure, it automatically preserves the system energy and stability.   Owing to these properties, the symplectic method outperforms  the POD for model reduction of Hamiltonian systems, especially when stability is taken into consideration for long-time integration. Since many physical and engineering systems have external forces, this paper applies symplectic algorithms for model reduction of more general dynamical systems with external forces. Besides PSD, there are also other structure-preserving model reduction methods in the context of classical mechanics, including the Lagrangian approach~\cite{Marsden:03h, CarlbergK:14a} and the port-Hamiltonian approach~\cite{HartmannC:10a,vdSchaft:10a,vdSchaft:12a}. Compared with  these methods, PSD is directly related to symplectic geometry, and provides more flexibility to construct an optimal subspace to fit empirical data.

The remainder of this paper is organized as follows.  Since forced Hamiltonian equations are anchored in d'Alembert's principle,  we reformulate d'Alembert's principle in the Hamiltonian form in section \ref{sec:forced}. Corresponding to the integral and local forms of d'Alembert's principle, section \ref{sec:twoapp} presents  two different structure-preserving  approaches for model reduction of forced Hamiltonian equations based on the variational principle and on the structure-preserving projection, respectively.  We also prove that the two approaches are equal in the sense that they yield the same reduced equation and provide a PSD algorithm to construct the reduced basis function. In section \ref{sec:dissipative}, we discuss dissipative Hamiltonian systems, and prove that the proposed method  automatically preserves the dissipativity. In section \ref{sec:numerical_diss}, the stability, accuracy, and efficiency of the proposed technique are illustrated through numerical simulations of a dissipative  wave equation.  Finally, conclusions are offered in section \ref{sec:conclusion}.

\section{Forced Hamiltonian equations}\label{sec:forced} In Hamiltonian mechanics, a mechanical system with external forces can be represented by a forced Hamiltonian equation. In this section, we first represent the forced Hamiltonian equation, and then derive it by d'Alembert's principle.

Let $Q$ be an $n$-dimensional vector space over $\mathbb{R}$, $Q^*$ be its dual space, and ${\langle  \cdot, \cdot  \rangle}:Q^*\times Q \to \mathbb{R}$   be a nondegenerate duality paring. With $\mathbb{V}=Q  \oplus Q^*$, the pair $(\mathbb{V}, \Omega)$ is a symplectic vector space, where $\mathbb{V}$ is the phase space and $\Omega$ is a closed non-degenerate two-form on $\mathbb{V}$.     Assigning a symplectic form $\Omega$ to $\mathbb{V}$ is referred to as giving $\mathbb{V}$ a symplectic structure. With $u_q, v_q \in Q$ and  $u_p, v_p \in Q^*$, we have
 \begin{equation}
 \Omega((u_q, u_p), (v_q, v_p)) = \langle v_p,  u_q\rangle - \langle u_p, v_p\rangle.
 \end{equation}
 Using canonical coordinates,  $\Omega$ is represented by  the Poisson matrix
\begin{equation*}
 \qquad {J_{2n}} = \begin{bmatrix}
  0_n & {I_n} \\
 {-I_n} & 0_n \\
 \end{bmatrix}.
\end{equation*}

Let $H:\mathbb{V} \to \mathbb{R}$ denote a smooth Hamiltonian function. The time evolution of forced Hamiltonian systems are defined by
\begin{equation}\label{diffhamiltons}
\dot q=  \nabla_p  H(q,p), \qquad
\dot p= -\nabla_q  H(q,p)+f_H(q,p),
 \end{equation}
 where  $q\in Q$ denotes the generalized coordinate, $p \in Q^*$ denotes the generalized momentum, and $f_H(q,p) \in Q^*$ is a force field.  We abstract this formulation by introducing a variable $x=(q,p)$ in the phase space $\mathbb{V}$. Then, (\ref{diffhamiltons}) becomes
\begin{equation}\label{fom0}
\dot x= X_H(x)+X_F(x),
\end{equation}
where $X_H(x)=J_{2n} \nabla_x H(x)$ denotes a Hamiltonian vector field,  and $X_F(x)=(0, f_H(x))$ denotes a vertical vector field with zero in its first component. The state variable $x(t)$ can also be considered as a function of $t$, which gives a trajectory  as $t$ varies over $\mathbb{R}_+$ with a fixed initial condition $x_0$. The trajectory $x(\mathbb{R}_+)$ contains a sequence of states  that follow from $x_0$.

Dissipative Hamiltonian systems are special forced Hamiltonian systems, where the system energy is decreasing with time. As an example, consider a one-dimensional  harmonic oscillator with undamped angular frequency $\omega_0$ and damping ratio $\zeta$. Newton's second law takes the form
\begin{equation}\label{examp}
 \ddot x +2\zeta \omega_0 \dot x +\omega_0^2 x =0.
 \end{equation}
With $q=x$ and $p=\dot x$, the Hamiltonian function is given by $H(q,p)={\frac{1}{2}}p^2 +\frac{1}{2}\omega_0^2q^2$, and the force field is given by  $f_H(q,p)=-2\zeta \omega_0 p$. Plugging $H(q,p)$ and $f_H(q,p)$ into (\ref{diffhamiltons}), we can get the Hamiltonian representation of the harmonic oscillator, which is exactly the same as (\ref{examp}).
The system energy is given by $E(t):=H(q(t), p(t))$. The time derivative of $E(t)$ is given by $\dot E(t)=-2 \zeta \omega_0 p(t)^2$, which is negative for every $t$.

Forced Hamiltonian equations can be derived from the Legendre transformation of Euler--Lagrange equations with generalized forces. Alternatively,  they can be directly obtained from the reformulation of d'Alembert's principle in the Hamiltonian coordinates. In this paper, we take the second approach, since this approach also gives us insight on reconstructing structure-preserving reduced models.  Our derivation closely follows reference~\cite{Marsden:03j} (pp. 205--210), where generalized Euler--Lagrangian equations are obtained from d'Alembert's principle in the Lagrangian coordinates. We shall begin with  the structure of the vertical vector field.

\subsection{Force fields} Let $H: \mathbb{V} \to \mathbb{R}$ be a Hamiltonian function,  $X_H: \mathbb{V} \to \mathbb{V}$ be the Hamiltonian vector field associated to $H$, and $\pi_Q: \mathbb{V} \to Q, (q, p) \mapsto q$ be the canonical projection. A vector field $X_F: \mathbb{V} \to \mathbb{V}$ is called \emph{vertical} if the projection of $X_F$ is zero, i.e., $\pi_Q  (X_F) = 0$. Such a vector field $X_F$ defines a one-form $\Delta^F: \mathbb{V} \to \mathbb{V}^*$ by contraction with $\Omega$: $\Delta^F = -i_{X_F} \Omega$. For any vertical vector field $u$ on $\mathbb{V}$, if the duality paring $\Delta^F \cdot u=0$, then we say  $\Delta^F$ is \emph{horizontal}. Here, we use the dot product to represent the duality paring of $\mathbb{V}^*$ and $\mathbb{V}$.

\medskip
\begin{proposition}\label{7.8.1}
 If $X_F$ is vertical, then the corresponding one-form $\Delta^F$ is horizontal. Conversely, given a horizontal one-form $\Delta^F$ on $\mathbb{V}$, the vector field $X_F$ on $\mathbb{V}$, given by $\Delta^F = -i_{X_F} \Omega$, is vertical.
\end{proposition}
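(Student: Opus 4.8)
The plan is to translate both notions---``vertical'' for the vector field and ``horizontal'' for the associated one-form---into a single linear-algebraic condition phrased in terms of the symplectic orthogonal complement of the vertical subspace, and then to show the two conditions coincide because that subspace is Lagrangian. Set $V := \{0\} \oplus Q^* = \{(0, w) : w \in Q^*\} \subset \mathbb{V}$. By definition $X_F$ is vertical precisely when $X_F \in V$ pointwise. On the other hand, $\Delta^F = -i_{X_F}\Omega$ is the one-form $v \mapsto -\Omega(X_F, v)$, so $\Delta^F$ is horizontal exactly when $\Omega(X_F, v) = 0$ for every vertical $v \in V$, that is, when $X_F \in V^\Omega$, the $\Omega$-orthogonal complement of $V$. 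Thus the whole proposition reduces to the identity $V = V^\Omega$.

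First I would verify that $V$ is isotropic. Taking two vertical vectors $(0, u_p)$ and $(0, v_p)$ and substituting into the given formula for $\Omega$, both surviving terms pair an element of $Q^*$ against the zero $Q$-component, so $\Omega((0,u_p),(0,v_p)) = 0$; hence $V \subseteq V^\Omega$. Since $\Omega$ is nondegenerate, $\dim V^\Omega = 2n - \dim V = 2n - n = n = \dim V$, and the inclusion forces $V = V^\Omega$. Both implications of the proposition then follow at once from $X_F \in V \iff X_F \in V^\Omega$.

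If one prefers an entirely explicit argument, the same conclusion drops out of a one-line coordinate computation. Writing $X_F = (X_q, X_p)$ and testing against an arbitrary vertical vector $(0, w)$ gives $\Delta^F \cdot (0,w) = -\Omega\big((X_q, X_p), (0, w)\big) = -\langle w, X_q\rangle$. Horizontality of $\Delta^F$ says this vanishes for all $w \in Q^*$, while verticality of $X_F$ says $X_q = 0$; the equivalence of these two statements is exactly the nondegeneracy of the duality pairing $\langle \cdot, \cdot\rangle$.

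I do not expect a genuine obstacle here, as the statement is essentially a reformulation of the fact that the vertical fibre is a Lagrangian subspace. The only points demanding care are bookkeeping ones: fixing the sign and slot conventions in $i_{X_F}\Omega$ so that $\Delta^F(v) = -\Omega(X_F, v)$, and, in the converse direction, explicitly invoking nondegeneracy of $\langle \cdot, \cdot\rangle$ to pass from $\langle w, X_q\rangle = 0$ for all $w$ to $X_q = 0$. The Lagrangian-subspace framing is attractive because it makes the equivalence manifestly symmetric, so that the single identity $V = V^\Omega$ discharges both directions simultaneously.
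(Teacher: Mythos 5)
Your proof is correct, and your primary argument takes a genuinely different route from the paper. The paper's proof is exactly your fallback ``explicit'' computation: write $X_F=(f_q,f_p)$, evaluate $\Delta^F\cdot u=-\langle u_p,f_q\rangle+\langle f_p,u_q\rangle$, restrict to vertical $u=(0,u_p)$, and invoke nondegeneracy of the duality pairing $\langle\cdot,\cdot\rangle$ to conclude $f_q=0$ --- nothing more. Your main argument instead packages both implications into the single identity $V=V^{\Omega}$ for the vertical subspace $V=\{0\}\oplus Q^*$: verticality of $X_F$ is membership in $V$, horizontality of $\Delta^F=-i_{X_F}\Omega$ is membership in $V^{\Omega}$, and the two coincide because $V$ is isotropic of dimension $n$ inside a $2n$-dimensional symplectic space, hence Lagrangian. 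This buys a structural explanation (the proposition is an instance of ``a subspace equals its symplectic complement iff it is Lagrangian'') and a manifestly symmetric treatment of the two directions, at the cost of invoking the dimension formula $\dim V^{\Omega}=2n-\dim V$ for nondegenerate forms, where the paper needs only the nondegeneracy of the pairing itself. Both arguments are sound; note also that the paper's displayed formula for $\Omega$ contains a typo ($\langle u_p,v_p\rangle$ should be $\langle u_p,v_q\rangle$), which your computation silently and correctly repairs.
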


\begin{proof}  Let $X_F=(f_q, f_p)$ and  $u=(u_q, u_p)$, where $f_q,  u_q, \in Q$ and  $f_p, u_p \in Q^*$.  Using the definition of $\Omega$, we have
\begin{equation}
\Delta^F \cdot u=-i_{X_F} \Omega(u)=-\Omega((f_q, f_p),(u_q, u_p)) = -\langle u_p,  f_q\rangle + \langle f_p, u_q\rangle.
\end{equation}
If $u$ is vertical, $u_q=0$. Thus,  $\Delta^F \cdot u = 0$ for every vertical $u$ is equivalent to $\langle u_p,  f_q\rangle=0$ for every $u_p$, which holds if and only if $f_q=0$, i.e., the vector field $X_F$ is vertical.
 \hfill
\end{proof}
\medskip

\begin{proposition}\label{prop:7.8.2}
A \emph{force field} $f_H : \mathbb{V}\to Q^*$  induces a horizontal one-form $\Delta^F$ on $\mathbb{V}$ by
\begin{equation}\label{7.8.2}
\Delta^F \cdot u = \langle {f_H, \pi_Q (u)} \rangle,
\end{equation}
where  $u$ is a vector field on $\mathbb{V}$. Conversely, formula (\ref{7.8.2}) defines a map $f_H$ for any horizontal one-form $\Delta^F$.
\end{proposition}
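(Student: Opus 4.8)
The statement has two directions, and my plan is to treat them separately while exploiting the direct-sum decomposition $\mathbb{V} = Q \oplus Q^*$ throughout. The forward direction is essentially a verification that the formula defines a horizontal one-form, whereas the converse rests on showing that horizontality forces $\Delta^F$ to factor through the projection $\pi_Q$. Both will be carried out pointwise, since $\mathbb{V}$ is a vector space and its tangent and cotangent spaces at each point are canonically $\mathbb{V}$ and $\mathbb{V}^*$.

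For the forward direction, I would start from a force field $f_H : \mathbb{V} \to Q^*$ and simply take (\ref{7.8.2}) as the definition of $\Delta^F$. At each point $x \in \mathbb{V}$, the assignment $u \mapsto \langle f_H(x), \pi_Q(u) \rangle$ is linear in $u$ because $\pi_Q$ is linear and the duality pairing $\langle \cdot, \cdot \rangle$ is linear in its second slot; this exhibits $\Delta^F(x)$ as an element of $\mathbb{V}^*$, so $\Delta^F$ is a genuine one-form. Horizontality is then immediate: if $u$ is vertical, then $\pi_Q(u) = 0$ by definition of vertical, whence $\Delta^F \cdot u = \langle f_H, 0 \rangle = 0$.

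The converse is where the real content lies. Given a horizontal one-form $\Delta^F$, I would define $f_H$ pointwise and then verify (\ref{7.8.2}). Fix $x \in \mathbb{V}$ and note $\Delta^F(x) \in \mathbb{V}^*$. The key step is to observe that for any two vectors $u = (u_q, u_p)$ and $u' = (u_q, u_p')$ sharing the same $Q$-component, their difference $u - u' = (0, u_p - u_p')$ is vertical, so horizontality gives $\Delta^F(x) \cdot (u - u') = 0$; hence $\Delta^F(x) \cdot u$ depends only on $\pi_Q(u) = u_q$. Consequently $\Delta^F(x)$ descends to a well-defined linear functional on $Q$, that is, an element of $Q^*$, which I define to be $f_H(x)$. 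By construction $\langle f_H(x), \pi_Q(u) \rangle = \Delta^F(x) \cdot u$, which is precisely (\ref{7.8.2}).

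I expect the only genuine subtlety to be this factorization-through-$\pi_Q$ step, which is the precise dual of the characterization in Proposition \ref{7.8.1}: under the splitting $\mathbb{V}^* \cong Q^* \oplus Q$, horizontality of $\Delta^F$ says exactly that its $Q$-component, the part pairing against vertical vectors, vanishes, leaving only the $Q^*$-part that we name $f_H$. Once this identification is in place, the linearity and smoothness of $f_H$ in $x$ are inherited directly from those of $\Delta^F$, and both directions close.
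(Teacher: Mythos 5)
Your proof is correct and follows essentially the same route as the paper's: the forward direction is the same verification that vertical vectors are annihilated because $\pi_Q(u)=0$, and your converse---showing $\Delta^F(x)\cdot u$ depends only on $\pi_Q(u)$ since two vectors with equal $Q$-components differ by a vertical vector---is just a rephrasing of the paper's decomposition $\Delta^F\cdot(u_q,u_p)=\Delta^F\cdot(u_q,0)+\Delta^F\cdot(0,u_p)=\Delta^F\cdot(u_q,0)$. The only cosmetic difference is that the paper invokes nondegeneracy of $\langle\cdot,\cdot\rangle$ to conclude $f_H$ is well-defined and smooth, whereas you obtain $f_H(x)$ directly as the descended linear functional on $Q$; in finite dimensions these amount to the same identification of $Q^*$ with linear functionals on $Q$.
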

\medskip

\begin{proof}
Given $f_H$, (\ref{7.8.2}) defines  a smooth one-form $\Delta^F$ on $\mathbb{V}$. If $u$ is vertical, then the right-hand side of  (\ref{7.8.2}) vanishes, and so $\Delta^F$ is horizontal. Conversely, if  $\Delta^F$ is a horizontal one-form on $\mathbb{V}$, $\Delta^F\cdot (u_q, u_p)=\Delta^F\cdot (u_q, 0)+\Delta^F\cdot (0, u_p)=\Delta^F\cdot (u_q, 0)$ for any vector field $u=(u_q, u_p)$.
Thus, (\ref{7.8.2}) is equivalent to  $\langle {f_H, u_q} \rangle=\Delta^F\cdot (u_q, 0)$. Since ${\langle  \cdot, \cdot  \rangle}$   is nondegenerate,  $f_H$ is well-defined and smooth.
 \hfill
\end{proof}
\medskip

Propositions \ref{7.8.1} and \ref{prop:7.8.2} imply that a force field $f_H$ introduces a horizontal one-form $\Delta^F$, which in turn determines a vertical vector field $X_F$. Using canonical coordinates, if $f_H(q,p) \in \mathbb{R}^{n}$ denotes a force field, then a horizontal one-form is given by $\Delta^F(q, p)=(f_H(q, p), 0)$. By contraction with  $\Omega$,  the corresponding vertical field is given by $X_F(q,p)=-J_{2n}\Delta^F(q, p)=(0, f_H(q,p))$.  Treating $\Delta^F(q,p)$ as the external force term on a mechanical system with a Hamiltonian $H$, we will next derive the  equation of motion by d'Alembert's principle.

\subsection{D'Alembert's principle}
D'Alembert's principle is a statement of the fundamental  law of motion in classical mechanics. It  is more general than Hamilton's principle since it considers both internal and  external forces.
In Newton's coordinates, the principle can be written as
\begin{equation}
\Sigma_i(F_i-\dot p_i)\cdot r_i=0,
 \end{equation}
 where $F_i$ is the total applied force (excluding constraint forces) on the $i$-th particle, $p_i$ is the momentum of the $i$-th particle, and $\delta r_i$ is the virtual displacement of the $i$-th particle which is consistent with the constraints. We shall reformulate d'Alembert's principle by Hamiltonian coordinates.


\medskip
\begin{definition}\label{7.8.4}
Given a Hamiltonian function $H(q,p)$ and a horizontal one-form $\Delta^F(q,p)$,  the integral d'Alembert's principle for a trajectory $(q(t), p(t))$ in $\mathbb{V}$ is
\begin{equation}\label{integral}
\delta \int_a^b {L(q(t),p(t))dt}  + \int_a^b \Delta^F(q(t),p(t)) \cdot (\delta q, \delta p) { dt}  = 0,
\end{equation}
where $(\delta q, \delta p)$ is a variation on $\mathbb{V}$, and $L:\mathbb{V}\to \mathbb{R}$ is the Lagrangian function  defined by $L(q,p)=\langle p, \dot q\rangle -H(q,p)$.
\end{definition}
\medskip

The variation of the first term is given by the usual expression
\begin{align*}
 \delta \int_a^b {L(q,p)dt} &=
 \int_a^b {\langle p, \delta \dot q\rangle + \langle  \delta p, \dot q \rangle - (\nabla_q H(q,p), \nabla_p H(q,p)) \cdot (\delta q, \delta p) dt}  \\
 &= \int_a^b {\langle p, \delta \dot q\rangle + \langle  \delta p, \dot q \rangle - \langle \nabla_q H(q,p), \delta q \rangle - \langle \delta p, \nabla_p H(q,p) \rangle dt}  \\
  &= \int_a^b { \langle -\dot p - \nabla_q H(q,p), \delta q \rangle +  \langle \delta p,  \dot q - \nabla _p H(q,p) \rangle dt}
 \end{align*}
for a given variation $(\delta q, \delta p)$, which vanishes at the endpoints. Since  the external force $\Delta^F(q,p)$ is horizontal,  Proposition (\ref{7.8.2}), implies that
\begin{equation*}
\Delta^F(q,p)\cdot (\delta q, \delta p)=\langle f_H(q,p), \delta q \rangle,
\end{equation*}
where $f_H$ is the force field corresponding to $\Delta^F$. Thus,  (\ref{integral}) gives
\begin{equation}\label{integral1}
\int_a^b { \langle -\dot p - \nabla_q H(q,p)+f_H(q,p), \delta q \rangle +  \langle \dot q - \nabla _p H(q,p), \delta p \rangle dt}  = 0.
\end{equation}
Therefore, the trajectory of the integral d'Alembert's principle is given by the forced Hamiltonian equation (\ref{diffhamiltons}).

We can also formulate an equivalent principle in terms of one-forms.

\medskip
\begin{definition}\label{7.8.3}
Given a Hamiltonian function $H(x)$ and a horizontal one-form $\Delta^F(x)$, the local d'Alembert's principle for the ultimate equation of motion,  $\dot x(t) =X(x(t))$,  is determined by
\begin{equation}\label{localdAlm}
i_{X} \Omega(x)=dH(x)-\Delta^F(x),
\end{equation}
where $X$ denotes the forced Hamiltonian vector field on $\mathbb{V}$ .
\end{definition}
\medskip

\begin{proposition}
The two forms of  d'Alembert's principle are equivalent, i.e., they give the same equation of motion.
\end{proposition}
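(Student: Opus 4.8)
The plan is to show that each form, once unpacked, reduces to the forced Hamiltonian equation (\ref{diffhamiltons}); the equivalence then follows at once because the two principles produce the same ODE. For the integral form the work is essentially already done in the text: the variation of the action combined with the horizontal one-form yields (\ref{integral1}), and since the variations $(\delta q, \delta p)$ are arbitrary and vanish at the endpoints, the fundamental lemma of the calculus of variations forces both bracketed integrands to vanish pointwise, giving exactly (\ref{diffhamiltons}). So I would simply invoke that derivation and turn attention to the local form.

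For the local form I would argue directly from the defining relations of the two vector fields, avoiding coordinates. The Hamiltonian vector field $X_H = J_{2n}\nabla_x H$ satisfies $i_{X_H}\Omega = dH$, and by Proposition \ref{7.8.1} the vertical field $X_F$ associated with the horizontal one-form $\Delta^F$ satisfies $i_{X_F}\Omega = -\Delta^F$. Using linearity of the interior product in the vector-field slot, $i_{X_H + X_F}\Omega = i_{X_H}\Omega + i_{X_F}\Omega = dH - \Delta^F$, so $X := X_H + X_F$ solves (\ref{localdAlm}). Because $\Omega$ is non-degenerate, the map $X \mapsto i_X\Omega$ is injective, hence $X = X_H + X_F$ is the \emph{unique} solution, and $\dot x = X_H + X_F$ is precisely (\ref{fom0}), equivalently (\ref{diffhamiltons}).

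As a transparent confirmation I would also verify this in canonical coordinates. Writing $X = (\dot q, \dot p)$ and reading $\Omega$ through the Poisson matrix, (\ref{localdAlm}) becomes $J_{2n}^{\top} X = \nabla_x H - \Delta^F$; since $J_{2n}^{-1} = -J_{2n}$ this gives $X = J_{2n}\nabla_x H - J_{2n}\Delta^F$, and the identity $-J_{2n}\Delta^F = (0, f_H) = X_F$ established in the force-field discussion yields the components $\dot q = \nabla_p H$ and $\dot p = -\nabla_q H + f_H$, recovering (\ref{diffhamiltons}). The argument is short, so there is no deep obstacle; the only care required is bookkeeping of the sign and transpose conventions — in particular confirming that the stated $\Omega$/$J_{2n}$ convention makes $i_{X_H}\Omega = dH$ (and not its negative) and that the sign in $\Delta^F = -i_{X_F}\Omega$ lines up so the force enters the $\dot p$ equation correctly. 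Once those are pinned down, non-degeneracy of $\Omega$ delivers uniqueness and the two forms coincide.
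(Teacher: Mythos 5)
Your proposal is correct and follows essentially the same route as the paper's own proof: both rely on the earlier derivation showing the integral form yields (\ref{diffhamiltons}), and both verify that $X = X_H + X_F$ satisfies the local principle via $i_{X_H}\Omega = dH$ and $i_{X_F}\Omega = -\Delta^F$, with uniqueness from nondegeneracy of $\Omega$. Your added coordinate check is a harmless (and correct) confirmation, not a different argument.
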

\medskip
\begin{proof}
Let $X_F$ denote the  vector field associated to $\Delta^F$, i.e., $\Delta^F=-i_{X_F} \Omega$. Since $\Delta^F$ is horizontal, $X_F$ is vertical. Since $dH=i_{X_H}\Omega$,
\begin{equation}\label{XXH}
X(x) = X_H(x) + X_F(x)
\end{equation}
satisfies the local d'Alembert's principle. Conversely,  the only vector field $X$ satisfying the local d'Alembert's principle is given by (\ref{XXH}),   and uniqueness is guaranteed by nondegeneracy of $\Omega$. Therefore, both the integral and local forms of d'Alembert's principle give the same vector field for the equation of motion $\dot x(t)=X(x(t))$. \hfill
\end{proof}
\medskip

From now on, we will refer to both (\ref{integral}) and (\ref{localdAlm}) as simply  d'Alembert's principle. By the above analysis, if a system satisfies d'Alembert's principle, the equation of motion is given by the forced Hamiltonian equation. Conversely, if a system is represented by a forced Hamiltonian equation, it automatically satisfies d'Alembert's principle. Since  d'Alembert's principle is the first principle  in classical mechanics, any mechanical  system  can be represented by a forced Hamiltonian equation. If a vector field $X$ can be represented by the sum of a Hamiltonian vector field $X_H$ and a vertical  vector field $X_F$, we say $X$ has forced Hamiltonian structure. In the next section, we develop a new model reduction method  which preserves the forced Hamiltonian structure.


\section{Reduction and reconstruction of dynamics}\label{sec:twoapp}
In this section, we propose two methods to construct reduced dynamics in a low-dimensional subspace. The first approach is based on the variational principle, which is closely related to the integral d'Alembert's principle;  the second approach is based on the structure-preserving projection, which is closely related to the local d'Alembert's principle. Both methods take advantage of empirical data to construct a reduced system, while simultaneously preserving the underlying forced Hamiltonian structure. In other words, if the original system is a forced Hamiltonian equation, the reduced system remains a forced Hamiltonian equation, but with significantly fewer dimensions.

\subsection{Variational principle}\label{subsec:var} In the context of Lagrangian mechanics, the variational principle was used to yield reduced systems while preserving the Lagrangian structure ~\cite{Marsden:03h, CarlbergK:14a}. The idea is to insert $q=\Phi r$ into the Lagrangian  to obtain a reduced system in terms of $r$. Here,  $\Phi\in \mathbb{R}^{n\times k}$ denotes a POD basis matrix and $r\in \mathbb{R}^k$ denotes the reduced coordinates.  Since $\dot q$ is the time derivative of $q$, it is fixed by $\dot q=\Phi \dot r$. The Hamiltonian approach provides more flexibility, since $q$ and $p$ have the same status in the phase space.

Let $(\mathbb{V}, \Omega)$ and $(\mathbb{W}, \omega)$ be two symplectic vector spaces; $\dim(\mathbb{V})=2n$, $\dim(\mathbb{W})=2k$, and $k \le n$.
Using canonical coordinates, a lift $\sigma:\mathbb{V} \to \mathbb{W}, z\mapsto x$  can be written as
\begin{equation}\label{Az}
x=Az,
\end{equation}
where $A\in \mathbb{R}^{2n\times 2k}$. Using the block form, $z=(r,s)$, $x=(q,p)$, and
\begin{equation}\label{matA}
 A=
 \begin{bmatrix}
  A_{qq} & A_{qp} \\
 A_{pq} & A_{pp} \\
 \end{bmatrix}.
\end{equation}
Then,  the map $x=\sigma (z)$ is represented by
\begin{equation*}
  \begin{bmatrix}
 q \\
 p \\
 \end{bmatrix}=
 \begin{bmatrix}
  A_{qq} & A_{qp} \\
 A_{pq} & A_{pp} \\
 \end{bmatrix}
  \begin{bmatrix}
 r \\
 s \\
 \end{bmatrix}=
   \begin{bmatrix}
  A_{qq}r + A_{qp}s \\
 A_{pq}r+ A_{pp}s \\
 \end{bmatrix}.
\end{equation*}
In order to construct a reduced equation, we can plug $q=A_{qq}r + A_{qp}s$ and $p=A_{pq}r + A_{pp}s$ into (\ref{integral}) and take the variation on $(\delta r, \delta s)$. This yields
\begin{equation*}
 M \begin{bmatrix}
 \dot r \\
 \dot s \\
 \end{bmatrix}=
 \begin{bmatrix}
  \nabla_r \tilde H(r,s) \\
 \nabla_s \tilde H(r,s) \\
 \end{bmatrix}
  -
  \begin{bmatrix}
 A_{qq}^T \tilde f_H(r,s) \\
 A_{qp}^T \tilde f_H(r,s) \\
 \end{bmatrix},
\end{equation*}
where $\tilde H(r,s)=H(A_{qq}r + A_{qp}s, A_{pq}r + A_{pp}s)$, $\tilde f_H(r,s)=f_H(A_{qq}r + A_{qp}s, A_{pq}r + A_{pp}s)$, and
\begin{equation*}
 M=
\begin{bmatrix}
 A_{pq}^T A_{qq}-A_{qq}^T A_{pq} & A_{pq}^T A_{qp}-A_{qq}^T A_{pp} \\
 A_{pp}^T A_{qq}-A_{qp}^T A_{pq} & A_{pp}^T A_{qp}-A_{qp}^T A_{pp} \\
 \end{bmatrix}.
\end{equation*}
Suppose $M$ is invertible, we obtain
\begin{equation}\label{varrs1}
\begin{bmatrix}
 \dot r \\
 \dot s \\
 \end{bmatrix}= M^{-1}
 \begin{bmatrix}
  \nabla_r \tilde H(r,s) \\
 \nabla_s \tilde H(r,s) \\
 \end{bmatrix}
  -M^{-1}
  \begin{bmatrix}
 A_{qq}^T \tilde f_H(r,s) \\
 A_{qp}^T \tilde f_H(r,s) \\
 \end{bmatrix}.
\end{equation}
Equation (\ref{varrs1}) is the reduced system constructed by the variational principle.  Next, we add some constraints to $A$ such that   (\ref{varrs1})  preserves the forced Hamiltonian structure.


\medskip
\begin{definition}
Let $(\mathbb{V}, \Omega)$ and $(\mathbb{W}, \omega)$ be two symplectic vector spaces; $\dim(\mathbb{V})=2n$, $\dim(\mathbb{W})=2k$, and $k \le n$.
A lift $\sigma: \mathbb{W} \to \mathbb{V}$ is called symplectic if it preserves the symplectic structure:
\begin{equation} \label{symp}
 \omega(z, w)=\Omega(\sigma (z), \sigma (w)),
\end{equation}
for every  $z, w \in \mathbb{W}$.
\end{definition}
\medskip

Let $A$ denote the matrix form of $\sigma$ in canonical coordinates, then (\ref{symp}) implies
\begin{equation}\label{AJA1}
A^T J_{2n} A =J_{2k}.
\end{equation}
In this case, we say the matrix $A$ is symplectic, written as $A\in Sp({2k},\mathbb{R}^{2n})$, where
\begin{equation}
Sp({2k},\mathbb{R}^{2n}):=\{A\in \mathbb{R}^{2n\times 2k}| A^T J_{2n} A= J_{2k}\}
\end{equation}
denotes the \emph{symplectic Stiefel manifold}.

 The symplectic condition  can also be represented in  block form by plugging (\ref{matA})  into
(\ref{AJA1}).

\medskip
\begin{proposition}\label{4.1}
 The matrix $A=[A_{qq}, A_{qp}; A_{pq}, A_{pp}]$  is symplectic if and only if $A_{qq}^TA_{pq}$ and $A_{qp}^TA_{pp}$ are symmetric and $A_{qq}^TA_{pp}-A_{pq}^TA_{qp}=I_k$.
\end{proposition}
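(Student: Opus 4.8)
The plan is to verify the symplectic condition (\ref{AJA1}), namely $A^T J_{2n} A = J_{2k}$, by a direct block computation and then simply read off the three stated conditions. First I would substitute the block form (\ref{matA}) of $A$ together with the block form of $J_{2n}$, and carry out the product in two stages. Computing $J_{2n}A$ first collapses the Poisson matrix cleanly, giving a $2n\times 2k$ matrix with blocks $A_{pq}$, $A_{pp}$, $-A_{qq}$, $-A_{qp}$; left-multiplying by $A^T$ then yields a $2k\times 2k$ matrix partitioned into four $k\times k$ blocks.

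Explicitly, I expect $A^T J_{2n} A$ to work out to
\begin{equation*}
\begin{bmatrix}
A_{qq}^T A_{pq} - A_{pq}^T A_{qq} & A_{qq}^T A_{pp} - A_{pq}^T A_{qp} \\
A_{qp}^T A_{pq} - A_{pp}^T A_{qq} & A_{qp}^T A_{pp} - A_{pp}^T A_{qp}
\end{bmatrix}.
\end{equation*}
Equating this to $J_{2k} = [\,0_k,\ I_k;\ -I_k,\ 0_k\,]$ block by block produces four matrix equations. Requiring the two diagonal blocks to vanish says precisely that $A_{qq}^T A_{pq}$ coincides with its transpose and $A_{qp}^T A_{pp}$ coincides with its transpose, i.e. both are symmetric; requiring the top-right block to equal $I_k$ gives $A_{qq}^T A_{pp} - A_{pq}^T A_{qp} = I_k$. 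These are exactly the three conditions in the statement.

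The one point to handle with care is that the four block equations are \emph{not} independent, so I should explain why only three conditions appear. The bottom-left block $A_{qp}^T A_{pq} - A_{pp}^T A_{qq}$ is the negative transpose of the top-right block $A_{qq}^T A_{pp} - A_{pq}^T A_{qp}$; hence the requirement that it equal $-I_k$ is merely the transpose of the requirement that the top-right block equal $I_k$ (using $I_k^T = I_k$), and is therefore automatic once the latter holds. I do not anticipate any real obstacle, since every step is a reversible matrix identity: the block computation is exact, and each surviving block equation is equivalent to its corresponding condition, so the equivalence holds in both directions at once. The only subtlety worth flagging in the write-up is this transpose redundancy among the off-diagonal blocks, which is precisely what reduces the naive count of four identities to the three listed in the proposition.
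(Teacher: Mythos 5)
Your proof is correct and follows exactly the route the paper intends: the paper states this proposition as an immediate consequence of ``plugging (\ref{matA}) into (\ref{AJA1})'', and your block computation of $A^T J_{2n} A$ carries out precisely that substitution, with all four block identities worked out correctly. Your explicit remark that the bottom-left block equation is the negative transpose of the top-right one (and hence redundant) is a welcome clarification the paper leaves unstated.
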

\medskip

The next lemma gives a sufficient and necessary condition such that the variational  approach is structure-preserving for any forced Hamiltonian equations.

\medskip
\begin{lemma}
The reduced equation (\ref{varrs1}) constructed by the variational principle preserves the forced Hamiltonian structure for any Hamiltonian functions $H(q,p)$ and force fields $f_H(q,p)$  if and only if $A\in Sp({2k},\mathbb{R}^{2n})$ and $A_{qp}=0$.
\end{lemma}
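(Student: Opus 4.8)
My plan is to treat the right-hand side of (\ref{varrs1}) as the sum of an $H$-generated piece and an $f_H$-generated piece, and to require that each inherit the correct structure \emph{separately}, exploiting that $H$ and $f_H$ are arbitrary and may be varied independently. Here ``preserving the forced Hamiltonian structure'' means that (\ref{varrs1}) is again a forced Hamiltonian equation $\dot z=J_{2k}\nabla_z\tilde H(z)+(0,\tilde g(z))$ driven by the \emph{induced} Hamiltonian $\tilde H=H\circ\sigma$ together with some vertical reduced force $(0,\tilde g)$. The computational backbone is the identity $M=-A^{T}J_{2n}A$, obtained by multiplying out the blocks of $A$ against $J_{2n}$ and comparing with the given four-block expression for $M$. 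This identity is decisive: since $(-J_{2k})^{-1}=J_{2k}$, it shows that $M^{-1}=J_{2k}$ is equivalent to $A^{T}J_{2n}A=J_{2k}$, i.e.\ to $A\in Sp(2k,\mathbb{R}^{2n})$ (equivalently, to the three block relations of Proposition \ref{4.1}). It also shows that the standing invertibility of $M$ forces $A$ to have full column rank, since $Av=0$ implies $Mv=0$.

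For the ``if'' direction I would substitute $A\in Sp(2k,\mathbb{R}^{2n})$ and $A_{qp}=0$ into (\ref{varrs1}). Symplecticity gives $M^{-1}=J_{2k}$, so the first term becomes $J_{2k}\nabla_z\tilde H$, the Hamiltonian vector field of the induced Hamiltonian. With $A_{qp}=0$ the forcing term collapses to $-J_{2k}(A_{qq}^{T}\tilde f_H,0)=(0,A_{qq}^{T}\tilde f_H)$, which is vertical. Hence (\ref{varrs1}) is a forced Hamiltonian equation with Hamiltonian $\tilde H$ and reduced force field $A_{qq}^{T}\tilde f_H$, as required.

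For the ``only if'' direction I would argue by varying $H$ and $f_H$ independently. Fixing a base point $z_0$ and taking $f_H\equiv 0$, the reduced field is $M^{-1}\nabla_z\tilde H$, and structure preservation demands that this equal $J_{2k}\nabla_z\tilde H$. Because $\nabla_z\tilde H(z_0)=A^{T}\nabla_x H(\sigma(z_0))$ sweeps out all of $\mathbb{R}^{2k}$ as $H$ varies---here full column rank of $A$, hence surjectivity of $A^{T}$, is used---this yields $M^{-1}=J_{2k}$, so $A$ is symplectic. Keeping $H$ fixed and switching on $f_H$, the extra term is $-M^{-1}(A_{qq}^{T}\tilde f_H,A_{qp}^{T}\tilde f_H)=(-A_{qp}^{T}\tilde f_H,A_{qq}^{T}\tilde f_H)$, whose first block must vanish for every admissible value of $\tilde f_H\in\mathbb{R}^{n}$; since $\tilde f_H(z_0)$ is arbitrary this forces $A_{qp}^{T}=0$, i.e.\ $A_{qp}=0$.

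The main obstacle I anticipate is the logical core of the ``only if'' step: justifying that the $H$-generated and $f_H$-generated contributions must be structurally correct separately. A forced Hamiltonian field constrains only its Hamiltonian part to be $J_{2k}\nabla\tilde H$ while leaving the vertical force free, so a careless argument would pin down only the first block row of $M^{-1}$ and fail to deliver full symplecticity; the remedy is to exploit that $H$ and $f_H$ are independent and that either may be set to zero, thereby decoupling the two requirements and, in particular, insisting that the reduced force depend on $f_H$ alone and not on $H$. The secondary technical points---that $\nabla_z\tilde H$ and $\tilde f_H$ each range over the full ambient space---both reduce to the full-column-rank of $A$ guaranteed by invertibility of $M$.
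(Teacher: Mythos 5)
Your proof is correct and follows essentially the same route as the paper's: the ``if'' direction by direct substitution, and the ``only if'' direction by first setting $f_H\equiv 0$ to force $M^{-1}=J_{2k}$ (symplecticity) and then using the arbitrariness of $\tilde f_H$ to force $A_{qp}=0$. Your identity $M=-A^{T}J_{2n}A$ is precisely what underlies the paper's appeal to Proposition \ref{4.1}, and your rank argument (invertibility of $M$ gives $A$ full column rank, so $\nabla_z\tilde H(z_0)$ sweeps out $\mathbb{R}^{2k}$) together with your explicit insistence that the reduced force vanish when $f_H=0$ merely make rigorous two steps the paper leaves implicit.
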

\medskip
\begin{proof}
If $A\in Sp({2k},\mathbb{R}^{2n})$ and $A_{qp}=0$, then (\ref{varrs1}) reduces to
\begin{equation}\label{reduceddiff}
\begin{bmatrix}
 \dot r \\
 \dot s \\
 \end{bmatrix}= J_{2n}
 \begin{bmatrix}
  \nabla_r \tilde H(r,s) \\
 \nabla_s \tilde H(r,s) \\
 \end{bmatrix}
  +
  \begin{bmatrix}
 0 \\
A_{qq}^T \tilde f_H(r,s) \\
 \end{bmatrix}.
\end{equation}
where $\tilde H(r,s)$ represents the reduced Hamiltonian function and $A_{qq}^T \tilde f_H(r,s)$ represents the reduced force field.

Conversely, suppose that the reduced equation (\ref{varrs1}) preserves the forced Hamiltonian structure for any high-dimensional systems of the form (\ref{diffhamiltons}). Let $f_H(q,p)=0$, then (\ref{varrs1}) reduces to
\begin{equation}\label{varrs2}
\begin{bmatrix}
 \dot r \\
 \dot s \\
 \end{bmatrix}= M^{-1}
 \begin{bmatrix}
  \nabla_r \tilde H(r,s) \\
 \nabla_s \tilde H(r,s) \\
 \end{bmatrix}.
\end{equation}
If this equation is Hamiltonian for any $\tilde H(r,s)$, we must have  $M^{-1}=J_{2k}$. By Proposition \ref{4.1},  $M^{-1}=J_{2k}$ is equivalent to $A\in Sp({2k},\mathbb{R}^{2n})$. Now we plug  $M^{-1}=J_{2k}$ into (\ref{varrs1}). If the second term on the right-hand side of  (\ref{varrs1}) is a vertical, then  $ A_{qp}^T \tilde f_H(q,p)=0$. Since $\tilde f_H(q,p)$ can be arbitrary, this implies that $A_{qp}=0$.
\hfill
\end{proof}

\subsection{Structure-preserving  projection}\label{subsec:projection}
In~\cite{Mohseni:14b}, the symplectic  projection was proposed to construct reduced models for  Hamiltonian equations while preserving the symplectic structure.
In this section, we extend the symplectic projection to structure-preserving projection of forced Hamiltonian systems.
The idea is to add some constraints to the symplectic projection so that the new projection also preserves the structure of the vertical vector field.

\subsubsection{Symplectic projection} We begin with the basic definition of the symplectic projection.

\begin{definition}
Suppose $\sigma:\mathbb{W} \to \mathbb{V}$ is a symplectic lift. Then the adjoint of $\sigma$ is the linear mapping $\pi:  \mathbb{V}  \to \mathbb{W}$ satisfying
 \begin{equation}\label{projection000}
\omega(w, \pi(x))=\Omega(\sigma(w),x),
\end{equation}
for every $w\in \mathbb{W}$ and $x\in \mathbb{V}$. We say $\pi$ is the symplectic projection induced by $\sigma$.
\end{definition}
\medskip

Using canonical coordinates, $\sigma$ can be represented by a symplectic matrix $A$. Then, the symplectic projection $\pi: x \mapsto z$  can be written as
 \vspace{-2 mm}
 \begin{equation}
z=A^+x,
\end{equation}
where $A^+\in \mathbb{R}^{2k\times 2n}$. Equation (\ref{projection000}) implies that $J_{2k}A^+=A^TJ_{2n}$. Since $J_{2k}$ is invertible, it follows that
\begin{equation}\label{A+}
A^+=J_{2k}^TA^TJ_{2n}.
\end{equation}

Since $A^TJ_{2n}A=J_{2k}$,  $A^+$ is a left inverse of $A$, i.e.,
\begin{equation}\label{AA+=I}
A^+A=I_{2k}.
\end{equation}
In general, $A^+$ is not equal to the Moore--Penrose pseudoinverse $(A^TA)^{-1}A^T$, and the left inverse of $A$ is not unique. However, since $\Omega$ and $\omega$ are nondegenerate, $A^+$ is the unique  adjoint matrix of $A$ with respect to the Poisson matrices $J_{2n}$ and $J_{2k}$.

Equation (\ref{AA+=I}) implies that  $\pi\circ \sigma=id_{\mathbb{W}}$. Since $(\sigma \circ \pi) \circ (\sigma \circ \pi)=\sigma \circ (\pi \circ \sigma) \circ \pi=\sigma \circ \pi$, $\sigma \circ \pi$ defines  a projection operator on $\mathbb{V}$.
\medskip

\begin{proposition}\label{lem:3.7}
Suppose $\sigma:\mathbb{W} \to \mathbb{V}$ is a symplectic lift and $\pi:  \mathbb{V}  \to \mathbb{W}$ is a symplectic projection  introduced by $\sigma$. Then
\begin{equation}\label{constantiden}
\Omega(u, (\sigma \circ \pi)(v))=\Omega((\sigma \circ \pi)(u),v)=\Omega((\sigma \circ \pi)(u),(\sigma \circ \pi)(v))=\omega(\pi(u), \pi(v)),
\end{equation}
for every $u, v\in \mathbb{V}$.
\end{proposition}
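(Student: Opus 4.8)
The plan is to show that all three expressions formed from $\Omega$ in \eqref{constantiden} reduce to the single quantity $\omega(\pi(u),\pi(v))$; once each is matched against this common value, the entire chain of equalities follows at once. The only ingredients I intend to use are the two defining relations already in hand: the symplectic-lift identity \eqref{symp}, $\omega(z,w)=\Omega(\sigma(z),\sigma(w))$ for $z,w\in\mathbb{W}$, and the adjoint relation \eqref{projection000}, $\omega(w,\pi(x))=\Omega(\sigma(w),x)$ for $w\in\mathbb{W}$, $x\in\mathbb{V}$, together with the antisymmetry of the two-forms $\Omega$ and $\omega$.

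First I would dispatch the third expression $\Omega((\sigma\circ\pi)(u),(\sigma\circ\pi)(v))$ by invoking the symplectic-lift identity with $z=\pi(u)$ and $w=\pi(v)$, which gives $\Omega(\sigma(\pi(u)),\sigma(\pi(v)))=\omega(\pi(u),\pi(v))$ directly. Next, for the second expression $\Omega((\sigma\circ\pi)(u),v)$ I would apply the adjoint relation with $w=\pi(u)$ and $x=v$, so that $\Omega(\sigma(\pi(u)),v)=\omega(\pi(u),\pi(v))$, again immediately.

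The first expression $\Omega(u,(\sigma\circ\pi)(v))$ is the only one that does not fall out verbatim. Here I would apply the adjoint relation in the form $\Omega(\sigma(\pi(v)),u)=\omega(\pi(v),\pi(u))$ (taking $w=\pi(v)$, $x=u$) and then use antisymmetry on both sides: $\Omega(\sigma(\pi(v)),u)=-\Omega(u,(\sigma\circ\pi)(v))$ and $\omega(\pi(v),\pi(u))=-\omega(\pi(u),\pi(v))$, so the two minus signs cancel and $\Omega(u,(\sigma\circ\pi)(v))=\omega(\pi(u),\pi(v))$. With all three $\Omega$-expressions shown equal to $\omega(\pi(u),\pi(v))$, the stated identity is complete.

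The main obstacle is essentially nonexistent; the single point demanding care is the sign bookkeeping in the last step, where antisymmetry must be used on $\Omega$ and on $\omega$ simultaneously so that the minus signs cancel rather than accumulate. As an alternative I could verify everything in canonical coordinates via $A^+=J_{2k}^TA^TJ_{2n}$, reducing each claim to the symmetry of the matrix $J_{2n}AA^+$ (which follows from $A^TJ_{2n}A=J_{2k}$ and $J^T=-J$), but the coordinate-free argument above is shorter and sidesteps any sign-convention pitfalls in the matrix representation of $\Omega$.
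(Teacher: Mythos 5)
Your proof is correct, and it takes a genuinely different route from the paper's. You argue coordinate-free: each of the three $\Omega$-expressions is matched against the common value $\omega(\pi(u),\pi(v))$ using only the lift identity (\ref{symp}), the adjoint relation (\ref{projection000}), and antisymmetry of the two forms; the one sign-sensitive step (the first expression, where you apply the adjoint relation with $w=\pi(v)$, $x=u$ and then cancel the two minus signs coming from antisymmetry of $\Omega$ and of $\omega$) is handled correctly. The paper instead proves the statement entirely in canonical coordinates: it rewrites (\ref{constantiden}) as the matrix chain (\ref{constantiden1}) and verifies it by substituting $A^+=J_{2k}^TA^TJ_{2n}$ together with $A^TJ_{2n}A=J_{2k}$ --- essentially the ``alternative'' you mention and set aside at the end. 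What your version buys: it shows the identity is a consequence of the defining properties of $\sigma$ and its symplectic adjoint alone, independent of any choice of canonical basis, so it transfers verbatim to arbitrary symplectic vector spaces and avoids the sign-convention pitfalls of the matrix representation that you flag. What the paper's version buys: the coordinate form (\ref{constantiden1}) is what gets invoked downstream (e.g., in the block computation of $AA^+$ and in the proof of Theorem \ref{thm:preserving}), and the verification there is a purely mechanical substitution.
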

\medskip

\begin{proof}
Using canonical coordinates, (\ref{constantiden}) can be rewritten as
\begin{equation}\label{constantiden1}
u^T J_{2n} AA^+v=(AA^+u)^T J_{2n} v=(AA^+u)^T J_{2n} (AA^+v)=(A^+u)^T J_{2k} (A^+v),
\end{equation}
which can be  verified by replacing $A^+$ with (\ref{A+}).
\hfill
\end{proof}
\medskip

Since $\Omega$ is skew-symmetric,  Proposition \ref{lem:3.7} implies that
\begin{equation*}
\Omega(u, (\sigma \circ \pi)(u))=\Omega((\sigma \circ \pi)(u),u)=0.
\end{equation*}
As a consequence, for every $u,v \in \mathbb{V}$,
\begin{equation}\label{zerocond}
\Omega(u, (\sigma \circ \pi)(u+v))=\Omega(u, (\sigma \circ \pi)(v)).
\end{equation}

The symplectic projection defines a mapping from a high-dimensional space to a low-dimensional space. The same projection can also be applied a high-dimensional Hamiltonian system to obtain a reduced system while preserving the symplectic structure.

To see this, suppose the original system is Hamiltonian, i.e., $\dot x= X_H(x)$. Suppose  $x=Az$. Using the chain rule, we obtain ${\nabla _z}H(Az)=A^T{\nabla _x}H(x)$. Using $A^+J_{2n}=J_{2k}A^T$, we obtain the symplecitc projection of the tangent vector at $x$,
\begin{equation}
\pi (X_H(x))=A^+J_{2n}{\nabla _x}H(x) = J_{2k}A^T {\nabla _x}H(x)=J_{2k}{\nabla_z}H(Az)=J_{2k}{\nabla_z}\tilde H(z),
\end{equation}
where $\tilde H(z):=H(Az)$ defines a Hamiltonian function on $\mathbb{W}$. Since $X_{\tilde H}(z):=J_{2k}{\nabla_z}\tilde H(z)$  gives a Hamiltonian vector filed on $\mathbb{W}$, the reduced system $\dot z= X_{\tilde H}(z)$ is a well-defined and preserves the symplectic structure.

 With some extra constraints,  the next section shows that the symplectic projection  can also be applied to a forced Hamiltonian system to construct a reduced system while preserving the structure of the vertical vector field.

\subsubsection{Structure-preserving projection}
For a forced Hamiltonian system,  the corresponding vector field is given by $X(x)=X_H(x)+X_F(x)$ at each $x\in \mathbb{V}$. Then, we can define a reduced vector field by
\begin{equation}
\pi (X(\sigma(z)))=\pi (X_H(\sigma(z)))+\pi (X_F(\sigma(z))),
 \end{equation}
 at each $z\in \mathbb{W}$. If $\sigma$ and $\pi$ respectively represent the symplectic lift and symplectic projection,  the last section shows that $\pi (X_H(\sigma(z)))$ gives a  Hamiltonian vector field on $\mathbb{W}$. Thus, if the reduced system preserves the forced Hamiltonian structure, we only need $\pi (X_F(\sigma(z)))$ to be a vertical vector field on $\mathbb{W}$.

In block form, $A^+$ can be written as
\begin{equation}
 A^+=
\begin{bmatrix}
  A_{pp}^T & -A_{qp}^T \\
 -A_{pq}^T & A_{qq}^T \\
\end{bmatrix}.
\end{equation}
It follows that the projection $X_{\tilde F}(z)$ of the vertical vector field $X_{F}$ at $Az$ is given by
\begin{equation}
X_{\tilde F}(z)=A^+ X_F(Az)=
\begin{bmatrix}
  A_{pp}^T & -A_{qp}^T \\
 -A_{pq}^T & A_{qq}^T \\
\end{bmatrix}
\begin{bmatrix}
 0 \\
 f_H(Az) \\
\end{bmatrix}
=
\begin{bmatrix}
 -A_{qp}^T f_H(Az)  \\
 A_{qq}^T f_H(Az)  \\
\end{bmatrix}.
\end{equation}
Thus, the symplectic projection preserves the forced structure, i.e.,  $\pi X_F(Az)$ is vertical if and only if  $A_{qp}^T f_H(Az)=0$ for any $f_H(Az)$. This is equivalent to $A_{qp}=0$.

  \medskip
\begin{definition}
Let $z\in \mathbb{W}$ and $x\in \mathbb{V}$.
Using the canonical coordinates, a linear mapping $\pi: x  \mapsto z$ is a \emph{structure-preserving projection} if there exists a symplectic matrix $A\in Sp({2k},\mathbb{R}^{2n})$ with $A_{qp}=0$, such that
 \begin{equation}
z=A^+x.
\end{equation}
\end{definition}

 Now, suppose $A\in Sp({2k},\mathbb{R}^{2n})$, $A_{qp}=0$, and $x(t)\in {\rm{Range}}(A)$ for every $t$. Then, $x(t)=Az(t)$.
 Taking the time derivative of $z=A^+x$ and using (\ref{fom0}), the time evolution of $z(t)$ is given by
\begin{equation}\label{symplecticpro}
\dot z = \pi\dot x = \pi X_H(x) +\pi X_F(x) = X_{\tilde H}(z) +  X_{\tilde F}(z).
\end{equation}
Even if   $x(t)  \notin  {\rm{Range}}(A)$ for some $t$, the last expression is still well-defined forced Hamiltonian vector field. Thus, the reduced system constructed by the structure-preserving projection preserves the  forced Hamiltonian structure.

 \medskip

\begin{remark}\label{rmk:3.8}
\emph{Both the variational principle and the structure-preserving projection methods requires that $A\in Sp({2k},\mathbb{R}^{2n})$ and $A_{qp}=0$, which means that the two methods can share the same basis matrix $A$. Let $z=(r,s)$, then (\ref{reduceddiff}) and (\ref{symplecticpro}) define the same system on the subspace spanned by the column vectors of $A$.  Thus, two  methods  construct the same reduced system. From now on, we do not distinguish the variational principle and the structure-preserving projection when we mention a structure-preserving reduced model.
}
\end{remark}

\medskip

 \medskip
\begin{definition}
Given a $2n$-dimensional forced Hamiltonian system (\ref{fom0}) with an initial condition  $x_0\in \mathbb{R}^{2n}$, the structure-preserving reduced model  is a $2k$-dimensional ($k\le n$) system
 \begin{equation}\label{symprodiff}
\dot z =  X_{\tilde H}(z) +  X_{\tilde F}(z),
\end{equation}
with the initial condition  $z_0=A^+ x_0 \in \mathbb{R}^{2k}$, where $A\in Sp({2k},\mathbb{R}^{2n})$, $A_{qp}=0$, $X_{\tilde H}(z)$ and $X_{\tilde F}(z)$ respectively represent the  Hamiltonian vector field and the vertical vector  on $\mathbb{R}^{2k}$.
\end{definition}

\medskip

\begin{remark}\emph{
The vertical vector field $X_{\tilde F}$ defines a horizonal one-form $\Delta^{\tilde F}$  by contraction with $\omega$, i.e., $\Delta^{\tilde F} = -i_{X_{\tilde F}} \omega$. It follows that
\begin{equation}\label{redlocaldAlm}
i_Z \omega=d {\tilde H}- \Delta^{\tilde F}.
\end{equation}
This verifies that  the reduced system constructed by the structure-preserving projection also satisfies the local d'Alembert's  principle.}
\end{remark}
\medskip

\begin{remark}
\emph{
Suppose $\pi$ is a structure-preserving projection. Then,  there exists a symplectic matrix $A\in Sp({2k},\mathbb{R}^{2n})$ with $A_{qp}=0$. In the block form, this implies that
\begin{equation}\label{newA}
 A=
 \begin{bmatrix}
  A_{qq} & 0 \\
 A_{pq} & A_{pp} \\
 \end{bmatrix},
\end{equation}
$A_{qq}^TA_{pq}$ is symmetric, and  $A_{qq}^T A_{pp}=I_k$. Using (\ref{newA}), the projection operator $\sigma \circ \pi$ has the form
\begin{equation}\label{AA+}
 AA^+=
 \begin{bmatrix}
  A_{qq}A_{pp}^T & 0 \\
 A_{pq}A_{pp}^T-A_{pp}A_{pq}^T & A_{pp}A_{qq}^T \\
 \end{bmatrix},
\end{equation}
which gives an invariant subspace, $0 \oplus Q^*$, of $\mathbb{V}$. Thus, all vectors $u \in 0 \oplus Q^*$ are transformed by $\sigma \circ \pi$ into vectors that are also contained in $0 \oplus Q^*$. This can be stated as
\begin{equation}
u \in 0 \oplus Q^* \Rightarrow (\sigma \circ \pi) (u) \in 0 \oplus Q^*.
\end{equation}}
\end{remark}

\subsection{Proper symplectic decomposition (PSD)}\label{sec:psddiss}
 PSD is an  empirical model reduction technique, where the empirical data is used to construct a symplectic basis matrix $A$. Let $q(t_i), p(t_i)\in \mathbb{R}^{n}$  ($i=1,\ldots, N$) denote the empirical data.  Assume $n\ge 2N$. Rewriting the state variable in the form $x(t_i)=[q(t_i);p(t_i)]$, we can define a snapshot matrix in $\mathbb{R}^{2n\times N}$,
\begin{equation}\label{snap1}
M_x:=[x(t_1),\ldots,x(t_N)].
 \end{equation}
 The structure-preserving projection of $M_x$ onto a low dimensional subspace is given by $M_z=A^+M_x$, where $A\in Sp({2k},\mathbb{R}^{2n})$, $M_z=[z(t_1),\ldots, z(t_N)]\in \mathbb{R}^{2k\times N}$, and  $z(t_i)= A^+ x(t_i)$. The same projection of $M_x$ in the original coordinates is given by
$AM_z$, or $AA^+M_x$.

The Frobenius norm $\| \cdot \|_F$ can be used to measure the error between $M_x$ and its projection $A M_z$. Suppose a symplectic matrix $A$   minimizes the projection error in a least squares sense. Then, $A$ is a solution of the following  optimization problem:
\begin{equation}\label{opt1}
\begin{aligned}
&{\rm{minimize}} \quad  \|M_x-AA^+ M_x\|_F \\
 &{\rm subject \ to}  \quad  A^TJ_{2n}A=J_{2k} \quad {\rm{and}} \quad A_{qp}=0.
\end{aligned}
\end{equation}

Let $M_q=[q(t_1), \ldots, q(t_N)]$ and $M_p=[p(t_1), \ldots, p(t_N)]$. Using (\ref{AA+}), the cost function in (\ref{opt1}) can be expanded as
\begin{equation*}
\|M_q-A_{qq}A_{pp}^TM_q\|_F+\|M_p-(A_{pq}A_{pp}^T-A_{pp}A_{pq}^T) M_q -A_{pp}A_{qq}^T M_p\|_F.
\end{equation*}
By Remark \ref{rmk:3.8}, the constraint in (\ref{opt1}) holds if and only if $A_{qq}^TA_{pq}$ is symmetric and  $A_{qq}^T A_{pp}=I_k$.
Thus, (\ref{opt1}) is equivalent to
\begin{equation}\label{opt2}
\begin{aligned}
&{\rm{minimize}} \quad \|M_q-A_{qq}A_{pp}^TM_q\|_F+\|M_p-(A_{pq}A_{pp}^T-A_{pp}A_{pq}^T) M_q -A_{pp}A_{qq}^T M_p\|_F \\
 &{\rm subject \ to}  \quad  A_{qq}^TA_{pq}=A_{pq}^TA_{qq} \quad {\rm{and}} \quad A_{qq}^T A_{pp}=I_k.
\end{aligned}
\end{equation}

 Since matrices $A_{qq}$, $A_{pq}$ and $A_{qq}$ all have $n\times k$ elements and  the constraint region is nonconvex,  it is expected to be quite expensive to solve (\ref{opt2}) by nonconvex nonlinear programming.  To this effect, we use a singular value decomposition (SVD)-based method,  \emph{cotangent lift}, to construct a near optimal symplectic matrix  in a subset of $Sp(2k,\mathbb{R}^{2n})$ with $A_{qp}=0$.  The idea is to search for the optimal matrix, $A_1$, in a subset of $Sp(2k,\mathbb{R}^{2n})$ with $A_{qp}=0$, such that all the empirical data  lies near  ${\rm{Range}}(A_1)$. In particular, we assume that
 \begin{equation}\label{assump}
 A_{qp}=A_{pq}=0 \quad {\rm{and}} \quad A_{qq}=A_{qq}=\Phi.
 \end{equation}
 Then, $A_1 = {\rm{diag}}(\Phi, \Phi)$ for some $\Phi \in \mathbb{R}^{n\times k}$. It is straightforward to verify that $A_1^TJ_{2n}A_1=J_{2k}$ if and only if $\Phi^T\Phi=I_k$. Under these assumptions, (\ref{opt2}) reduces to
 \begin{equation}\label{opt3}
\begin{aligned}
&{\rm{minimize}} \quad \|M_q-\Phi\Phi^TM_q\|_F+\|P-\Phi \Phi^T P\|_F \\
 &{\rm subject \ to}  \quad  \Phi^T \Phi=I_k.
\end{aligned}
\end{equation}
The cost function in (\ref{opt3}) equals  $\|M_{q,p}-\Phi\Phi^T M_{q,p}\|_F$, where
\begin{equation}\label{exsnap}
M_{q,p}:=[q(t_1), \ldots,  q(t_N),  p(t_1), \ldots,  p(t_N) ]
\end{equation}
defines an extended snapshot matrix $M_{q,p}\in \mathbb{R}^{n\times 2N}$.

\begin{algorithm}
\caption{Cotangent Lift} \label{alg:lift}
\begin{algorithmic}
 \REQUIRE
An empirical data ensemble $\{q(t_i), p(t_i)\}_{i=1}^N$.
\ENSURE A symplectic matrix $A_1$ in block-diagonal form.
\STATE 1: Construct an extended snapshot matrix $M_{q,p}$ as (\ref{exsnap}).
\STATE 2: Compute the SVD of $M_{q,p}$ to obtain a POD basis matrix $\Phi$.
\STATE 3: Construct the symplectic matrix $A_1={\rm{diag}}(\Phi, \Phi)$.
\end{algorithmic}
\end{algorithm}

If $\Phi^*$ denotes the optimal value of $\Phi$ in (\ref{opt3}),  $\Phi^*$ can be directly solved by the SVD of $M_{q,p}$. Thus, the cotangent lift method simplifies the optimization problem (\ref{opt1}) to an SVD problem. Algorithm \ref{alg:lift} lists the detailed procedure of the cotangent lift method.   Since  the SVD of a $n\times m$ ($n\ge m$) matrix requires $2nm^2 + 2m^3$  operations \cite{TrefethenLN:97a}, the computational cost of Algorithm (\ref{alg:lift}) is $8nN^2+16N^3$, which is linearly dependent on $n$.

The cost function in (\ref{opt3}) is also equal to the projection error of the empirical data in the Frobenius norm  $\|M_x-AA^+ M_x\|_F$. Let  $\lambda_1\ge \lambda_2 \ge \ldots \ge \lambda_{2N}\ge 0$ denote the  singular values of $M_{q,p}$  in decreasing order. Then, the projection error of the cotangent lift method is  determined by the truncated singular values of $M_{q,p}$,
\begin{equation}\label{podtruc1}
E_{\rm{COT}}^{(2k)}={\left\| {(I - {\Phi^*}{\Phi^*}^T)M_{q,p}} \right\|_F} = \sqrt
{\sum\limits_{i = k+1}^{2N} {\lambda _i^2} },
\end{equation}
where we use the superscript $2k$ to emphasize that the symplectic subspace spanned by the column vectors of $A_1=({\rm{diag}}(\Phi, \Phi)$ has dimension $2k$.


\medskip
\begin{proposition} \label{2-approx}  Let $E_{\rm{COT}}^{(2k)}$, $E_{\rm{POD}}^{(2k)}$, and $E_{\rm{OPT}}^{(2k)}$  denote the  projection error  by the cotangent lift method, POD, and the nonlinear programming method to solve (\ref{opt1}), respectively. Then,
\begin{equation}
\frac{1}{2}E_{\rm{cot}}^{(4k)}\le E_{\rm{POD}}^{(2k)} \le E_{\rm{OPT}}^{(2k)} \le E_{\rm{COT}}^{(2k)}.
\end{equation}
\end{proposition}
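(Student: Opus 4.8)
The plan is to prove the chain from right to left, since the two rightmost inequalities are soft while the leftmost one carries the analytic content. For $E_{\rm OPT}^{(2k)}\le E_{\rm COT}^{(2k)}$, I would simply observe that the cotangent lift produces $A_1={\rm diag}(\Phi,\Phi)$ with $\Phi^T\Phi=I_k$, which by the discussion preceding Algorithm \ref{alg:lift} satisfies $A_1^TJ_{2n}A_1=J_{2k}$ and $(A_1)_{qp}=0$; hence $A_1$ is a feasible point of the optimization (\ref{opt1}). Its objective value is exactly $E_{\rm COT}^{(2k)}$, so the minimum $E_{\rm OPT}^{(2k)}$ over all feasible matrices cannot exceed it.

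For $E_{\rm POD}^{(2k)}\le E_{\rm OPT}^{(2k)}$, let $A$ be any feasible matrix for (\ref{opt1}) and set $S={\rm Range}(A)$, a $2k$-dimensional subspace. By (\ref{AA+=I}), $AA^+$ is idempotent with range $S$, i.e. a (generally oblique) projector onto $S$. Column by column, $AA^+x$ lies in $S$ whereas the orthogonal projector $P_S$ returns the closest point of $S$ to $x$; hence $\|x-AA^+x\|_2\ge\|x-P_Sx\|_2$, and summing the squared column norms gives $\|M_x-AA^+M_x\|_F\ge\|M_x-P_SM_x\|_F$. Since $P_S$ is a rank-$2k$ orthogonal projector and $E_{\rm POD}^{(2k)}$ is, by Eckart--Young, the minimal reconstruction error over all such projectors, the right-hand side is $\ge E_{\rm POD}^{(2k)}$. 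Taking $A$ to be the minimizer of (\ref{opt1}) yields the claim.

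The leftmost inequality is the heart of the matter, and I would recast both errors spectrally. Writing $G:=M_xM_x^T$, whose diagonal blocks are $M_qM_q^T$ and $M_pM_p^T$, and noting $M_{q,p}M_{q,p}^T=M_qM_q^T+M_pM_p^T$, one has $(E_{\rm POD}^{(2k)})^2=\sum_{i>2k}\sigma_i(M_x)^2=\sum_{i>2k}\lambda_i(G)$ and, by (\ref{podtruc1}), $(E_{\rm cot}^{(4k)})^2=\sum_{i>2k}\lambda_i(M_{q,p})^2=\sum_{i>2k}\lambda_i(M_qM_q^T+M_pM_p^T)$. The task thus reduces to comparing the spectrum of the block matrix $G$ with that of the sum of its diagonal blocks. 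The key estimate I would establish is the Cauchy-interlacing-type bound $\lambda_{2i-1}(M_qM_q^T+M_pM_p^T)\le 2\lambda_i(G)$ for every $i$. This comes from Courant--Fischer: for $u\in\mathbb{R}^n$ one has $u^T(M_qM_q^T+M_pM_p^T)u=w_1^TGw_1+w_2^TGw_2$ with $w_1=(u,0)$, $w_2=(0,u)$ of norm $\|u\|$; requiring both $w_1$ and $w_2$ to be orthogonal to the top $i-1$ eigenvectors of $G$ removes at most $2(i-1)$ degrees of freedom from $u$, leaving a subspace of dimension at least $n-2i+2$ on which the Rayleigh quotient of $M_qM_q^T+M_pM_p^T$ is bounded by $2\lambda_i(G)$, and the min--max principle converts this into the displayed bound.

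With this estimate in hand, the last step is a tail summation: grouping the eigenvalues of $M_qM_q^T+M_pM_p^T$ beyond the truncation index into consecutive pairs $(2i-1,2i)$ and using $\lambda_{2i}\le\lambda_{2i-1}\le 2\lambda_i(G)$ collapses the tail onto $\sum_i\lambda_i(G)$ with a factor that produces the constant $\tfrac12$ after taking square roots. I expect this spectral comparison to be the main obstacle: both deriving the interlacing bound between $G$ and the sum of its diagonal blocks and, above all, lining up the truncation indices so that the collapsed sum matches $E_{\rm POD}^{(2k)}$ with the constant coming out exactly $\tfrac12$. The two rightmost inequalities, by contrast, are essentially bookkeeping.
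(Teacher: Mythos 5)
Your two rightmost inequalities are fine, and in fact handled more carefully than in the paper: the paper simply invokes the nesting $\mathbb{M}_1\subset\mathbb{M}\subset\mathbb{R}^{2n\times 2k}$ of feasible sets, whereas you correctly observe that (\ref{opt1}) measures error with the \emph{oblique} projector $AA^+$ and pass to the orthogonal projector onto ${\rm{Range}}(A)$ before invoking Eckart--Young. Your spectral lemma for the leftmost inequality is also correct, and it is exactly the paper's key estimate in disguise: the paper proves $2\,{\rm{diag}}(M_x^TM_x,M_x^TM_x)\succeq M_{q,p}^TM_{q,p}$ and applies eigenvalue monotonicity to get $\lambda_i^2\le 2\sigma_{\lceil i/2\rceil}^2$ (with $\sigma_i$ the singular values of $M_x$ and $\lambda_i$ those of $M_{q,p}$), while you prove $\lambda_{2i-1}\bigl(M_qM_q^T+M_pM_p^T\bigr)\le 2\lambda_i\bigl(M_xM_x^T\bigr)$ by Courant--Fischer; since the transposed Gram matrices share nonzero spectra, these are the same bound.

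The gap is precisely the step you flagged, and it cannot be closed as you have set the problem up. Your pairing argument yields
\begin{equation*}
\bigl(E_{\rm{cot}}^{(4k)}\bigr)^2=\sum_{i>2k}\lambda_i^2\;\le\;4\sum_{i>k}\sigma_i^2,
\end{equation*}
and under your (standard Eckart--Young) convention $\bigl(E_{\rm{POD}}^{(2k)}\bigr)^2=\sum_{i>2k}\sigma_i^2$, the right-hand side equals $4\bigl(E_{\rm{POD}}^{(k)}\bigr)^2$, not $4\bigl(E_{\rm{POD}}^{(2k)}\bigr)^2$; since $E_{\rm{POD}}^{(k)}\ge E_{\rm{POD}}^{(2k)}$, that is strictly weaker than the claim. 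Worse, with your convention the claimed inequality is simply false: take $N=2$, $k=1$, and $q(t_1),q(t_2),p(t_1),p(t_2)$ linearly independent in $\mathbb{R}^n$; then $E_{\rm{POD}}^{(2)}=0$ (two snapshots always lie in a two-dimensional subspace), while $E_{\rm{cot}}^{(4)}>0$ because $M_{q,p}$ has rank four. What rescues the paper's proof is its own indexing: in the proof of Proposition \ref{2-approx} the paper sets $\bigl(E_{\rm{POD}}^{(2k)}\bigr)^2=\sum_{i=k+1}^{N}\sigma_i^2$, i.e., a rank-$k$ tail, and under that definition your grouping argument closes immediately with the constant $\tfrac12$. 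So the obstacle you identified is real, but it is not a missing spectral idea; it is a mismatch between your definition of $E_{\rm{POD}}^{(2k)}$ and the one the paper's proof actually uses --- a convention which, one should note, is itself in tension with the interpretation needed to make $E_{\rm{POD}}^{(2k)}\le E_{\rm{OPT}}^{(2k)}$ hold in the first half of the chain.
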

\vspace{-4mm}
\begin{proof}
The cotangent lift method yields an optimal symplectic matrix in  $\mathbb{M}_1:=\{A\in \mathbb{R}^{2n\times 2k}: A={\rm{diag}}(\Phi, \Phi), \ \Phi=\mathbb{R}^{n\times k}, \ {\rm{and}} \ \Phi^T\Phi=I_k\}$. The feasible set of the optimization problem (\ref{opt1}) is given by  $\mathbb{M}:=\{A\in \mathbb{R}^{2n\times 2k}: AJ_{2n}A=J_{2k} \  {\rm{and}} \  A_{qp}=0\}$.  POD can find the most optimal matrix in $\mathbb{R}^{2n \times 2k}$ to minimize the projection error.  Thus, corresponding to $\mathbb{M}_1 \subset  \mathbb{M} \subset \mathbb{R}^{2n\times 2k}$, we have $E_{\rm{POD}}^{(2k)} \le E_{\rm{OPT}}^{(2k)} \le E_{\rm{COT}}^{(2k)}$.

Next we will prove  $\frac{1}{2}E_{\rm{cot}}^{(4k)} \le E_{\rm{POD}}^{(2k)}$.  According to  (\ref{podtruc1}), $(E_{\rm{cot}}^{(4k)})^2=\sum_{i = 2k+1}^{2N} {\lambda _i^2}$. Similar, if $\sigma_1\ge \sigma_2\ge \ldots \ge \sigma_N\ge0$ denotes all the singular values of $M_x$ in descending order, then $(E_{\rm{POD}}^{2k})^2={\sum_{i=k+1}^N \sigma_i^2}$.

Since $M_{q,p}=\begin{bmatrix}
  Q \\
 P \\
\end{bmatrix}$, $M_{q,p}^T M_{q,p}=
\begin{bmatrix}
  Q^TQ & Q^TP \\
 P^TQ & P^T P \\
\end{bmatrix}\in \mathbb{R}^{2N\times 2N}$, with eigenvalues $\{\lambda_i^2\}_{i=1}^{2N}$.
 Since $M_x=\begin{bmatrix}
  Q & P
\end{bmatrix}$, $M_x^T M_x=Q^TQ+P^TP \in \mathbb{R}^{N\times N}$, with eigenvalues $\{\sigma_i^2\}_{i=1}^N$. By the construction, both $M_{q,p}^T M_{q,p}$ and $M_x^T M_x$ are positive-semidefinite.  Let $S={\rm{diag}} \{M_x^TM_x, M_x^TM_x \}$, then $S$ is also positive-semidefinite, and the $i$th largest eigenvalue of $S$ is $(\sigma_{\lceil {{\frac{i}{2}}} \rceil})^2$.
 Moreover,
\begin{equation*}
\begin{aligned}
2S-M_{q,p}^T M_{q,p}&=
\begin{bmatrix}
  2M_x^TM_x & 0 \\
 0 & 2M_x^TM_x \\
\end{bmatrix}-
\begin{bmatrix}
  Q^TQ & Q^TP \\
 P^TQ & P^T P \\
\end{bmatrix} \\
&=
\begin{bmatrix}
  Q^TQ & -Q^TP \\
 -P^TQ & P^TP \\
\end{bmatrix}+
2\begin{bmatrix}
  P^TP & 0 \\
 0 & P^TP \\
\end{bmatrix}\\
&=
\begin{bmatrix}
  Q^T \\ -P^T
\end{bmatrix}
\begin{bmatrix}
  Q & -P
\end{bmatrix}+
2\begin{bmatrix}
  P^TP & 0 \\
 0 & P^TP \\
\end{bmatrix}
.
\end{aligned}
\end{equation*}
The last equation implies that $2S\ge M_{q,p}^T M_{q,p}\ge 0$. By the min-max theorem, the $i$th largest eigenvalue of $2S$ is greater than the $i$th largest eigenvalue of $M_{q,p}^T M_{q,p}$. This implies that $2(\sigma_{\lceil {{\frac{i}{2}}} \rceil})^2\ge \lambda_i^2$.
It follows that
\begin{equation*}
(E_{\rm{cot}}^{(4k)})^2= \sum_{i=2k+1}^{2N} \lambda_i^2 \le 2 \sum_{i=2k+1}^{2N}(\sigma_{\lceil {{\frac{i}{2}}} \rceil})^2=4 \sum_{i=k+1}^N \sigma_i^2=4 (E_{\rm{POD}}^{(2k)})^2.
\end{equation*}
This completes the proof.
\hfill
\end{proof}
\medskip


\subsection{Energy preservation}\label{sec:energypre}
Let $x(t)$ be the solution of (\ref{fom0}) with $x(0)=x_0$, and $E(t)= H(x(t))$ be the corresponding system energy at time $t$. Since $dH(x) \cdot X_H(x)=\Omega(X_H(x), X_H(x))=0$, the time derivative of $E(t)$ equals
\begin{equation}\label{engvar}
\dot E(t)= dH \cdot X |_{x(t)}= dH \cdot X_F |_{x(t)}.
\end{equation}
Thus, the time derivative of system energy is completely determined by the Hamiltonian function $H(x)$ and the vertical vector field $X_F(x)$.

\medskip
\begin{proposition} \label{prop:7.8.8}
The time derivative of $E(t)$  can also be represented in terms of the force field, i.e.
\begin{equation}\label{dissipativity}
 dH \cdot X_F|_x =\langle f_H,  \dot q \rangle|_x,
\end{equation}
where $\dot q (x)= \pi_Q  (X_H(x))=\nabla_p  H(q,p)$.
\end{proposition}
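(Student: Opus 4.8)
The plan is to avoid a brute-force coordinate computation and instead thread together the intrinsic identities already established for $dH$, $\Delta^F$, and the symplectic form $\Omega$. The key observation is that, since $dH = i_{X_H}\Omega$, the duality pairing $dH \cdot X_F$ evaluated at $x$ is nothing but $\Omega(X_H, X_F)$. This reroutes the whole statement through the symplectic form, where the horizontal one-form $\Delta^F$ lives.

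First I would rewrite the left-hand side as $dH \cdot X_F = (i_{X_H}\Omega)\cdot X_F = \Omega(X_H, X_F)$. Next, using the skew-symmetry of $\Omega$ together with the defining relation $\Delta^F = -i_{X_F}\Omega$, I would convert this into $\Omega(X_H, X_F) = -\Omega(X_F, X_H) = (-i_{X_F}\Omega)\cdot X_H = \Delta^F \cdot X_H$. At this point the claim has been reduced to evaluating the horizontal one-form $\Delta^F$ on the Hamiltonian vector field $X_H$.

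Then I would invoke Proposition~\ref{prop:7.8.2}, which states that the horizontal one-form induced by the force field satisfies $\Delta^F \cdot u = \langle f_H, \pi_Q(u)\rangle$ for any vector field $u$ on $\mathbb{V}$. Applying this with $u = X_H$ gives $\Delta^F \cdot X_H = \langle f_H, \pi_Q(X_H)\rangle$, and the hypothesis $\dot q = \pi_Q(X_H) = \nabla_p H(q,p)$ identifies the projection, so that $dH \cdot X_F = \langle f_H, \dot q\rangle$, which is exactly the assertion.

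The argument carries no deep obstacle; the only point that demands care is the sign bookkeeping when passing from $\Omega(X_H, X_F)$ to $\Delta^F \cdot X_H$, which relies simultaneously on the skew-symmetry of $\Omega$ and on the sign convention $\Delta^F = -i_{X_F}\Omega$. As an independent sanity check I would also verify the identity directly in canonical coordinates, where $dH = (\nabla_q H, \nabla_p H)$ and $X_F = (0, f_H)$, so the pairing collapses to $\langle f_H, \nabla_p H\rangle$; substituting $\nabla_p H = \dot q$ then recovers the same result and confirms that the intrinsic derivation has the correct sign.
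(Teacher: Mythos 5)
Your proposal is correct and follows essentially the same route as the paper's proof: both rewrite $dH \cdot X_F = (i_{X_H}\Omega)\cdot X_F = \Omega(X_H,X_F)$, flip the sign by skew-symmetry, and then apply the relation $\Delta^F \cdot u = -\Omega(X_F,u) = \langle f_H, \pi_Q(u)\rangle$ from Propositions \ref{7.8.1} and \ref{prop:7.8.2} with $u = X_H$. The coordinate sanity check you add at the end is a harmless supplement, not a different argument.
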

\begin{proof}
Let $X_F$ be a vertical vector field. By Proposition \ref{7.8.1}, $X_F$ induces a horizontal one-form $\Delta^F=-i_{X_F} \Omega$ on $\mathbb{V}$;  by Proposition \ref{prop:7.8.2}, $\Delta^F$ in turn induces a force field $f_H$, which is given by
\begin{equation}\label{fH}
\langle {f_H, \pi_Q(u)} \rangle  = {\Delta^F} \cdot u =  - \Omega (X_F, u),
\end{equation}
where $u$ is a vector field on  $\mathbb{V}$. Let $X_H$ denote the Hamiltonian vector field. Then,
\begin{align*}
 dH \cdot X_F &= (i_{X_H}\Omega )\cdot X_F = \Omega (X_H,X_F) =  - \Omega (X_F, X_H) = \langle f_H, \pi_Q  (X_H)) \rangle,
\end{align*}
which gives (\ref{dissipativity}).
\hfill
\end{proof}
\medskip

Let  $\tilde E(t)= \tilde H(z(t))$ denote the system energy of the forced Hamiltonian system (\ref{symprodiff}) in reduced coordinates. Similar to (\ref{engvar}), the time derivative of $\tilde E(t)$ is given by
\begin{equation}\label{reddiss}
\dot {\tilde E}(t)=d \tilde H \cdot X_{\tilde F}|_{z(t)}.
\end{equation}

\begin{theorem}\label{thm:preserving}
The reduced forced Hamiltonian system exactly preserves the time derivative of system energy at $z\in\mathbb{W}$, i.e. $d \tilde H \cdot X_{\tilde F}|_z=d  H \cdot X_{F} |_{\sigma(z)}$, if any one of the following conditions is satisfied at $\sigma(z)$:
\begin{enumerate}[label={\rm{(}}\alph*{\rm{)}}]
\setlength{\itemsep}{5pt}
\item $(\sigma \circ \pi)(X_F)=X_F$.
  \item $(\sigma \circ \pi)(X_H)=X_H$.
  \item $(\sigma \circ \pi)(X_H)=X$.
  \item $(\pi_P\circ \sigma \circ \pi)(X_F)=\pi_P(X_F)$.
  \item $(\pi_Q \circ \sigma \circ \pi)(X_H)=\pi_Q (X_H)$.
\end{enumerate}
\end{theorem}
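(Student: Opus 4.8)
The plan is to reduce both sides of the claimed identity to a single duality pairing and then read off the five hypotheses as precisely the conditions under which the two pairings agree. First I would rewrite the energy rates in terms of the symplectic forms: exactly as in the proof of Proposition \ref{prop:7.8.8}, $dH \cdot X_F|_{\sigma(z)} = \Omega(X_H, X_F)|_{\sigma(z)}$ using $dH = i_{X_H}\Omega$, and likewise on the reduced space $d\tilde H \cdot X_{\tilde F}|_z = \omega(X_{\tilde H}, X_{\tilde F})|_z$ using $d\tilde H = i_{X_{\tilde H}}\omega$. Writing $u := X_H(\sigma(z))$, $v := X_F(\sigma(z))$ and $P := \sigma \circ \pi$, and recalling $X_{\tilde H}(z) = \pi(u)$ and $X_{\tilde F}(z) = \pi(v)$, the target identity becomes
\[
\omega(\pi(u), \pi(v)) = \Omega(u, v).
\]

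Next I would invoke Proposition \ref{lem:3.7}, which supplies the two interchangeable forms $\omega(\pi(u), \pi(v)) = \Omega(Pu, v) = \Omega(u, Pv)$, so that it suffices to prove either $\Omega(Pu, v) = \Omega(u,v)$ or $\Omega(u, Pv) = \Omega(u,v)$. The essential simplification comes from verticality. Since $v = X_F$ is vertical, $\pi_Q(v)=0$, and the coordinate formula for $\Omega$ collapses to $\Omega(a, v) = \langle \pi_P(v), \pi_Q(a)\rangle = \langle f_H, \pi_Q(a)\rangle$ for every $a$; in particular $\Omega(u,v) = \langle f_H, \pi_Q(X_H)\rangle = \langle f_H, \dot q\rangle$, matching Proposition \ref{prop:7.8.8}. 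Taking $a = Pu$ gives
\[
\Omega(Pu, v) - \Omega(u, v) = \langle f_H, \pi_Q(Pu) - \pi_Q(X_H)\rangle,
\]
so condition (e), $\pi_Q(Pu)=\pi_Q(X_H)$, forces equality immediately; moreover condition (b), $Pu = X_H$, and condition (c), $Pu = X = X_H + X_F$ (where $\pi_Q(X_F)=0$ kills the extra term), both specialize to (e).

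For the two remaining hypotheses I would use the dual route $\Omega(u, Pv)$. The crucial structural input here is that $Pv$ is \emph{again vertical}: by the invariant-subspace property recorded after (\ref{AA+}), $X_F \in 0 \oplus Q^*$ implies $(\sigma\circ\pi)(X_F)\in 0\oplus Q^*$, hence $\pi_Q(Pv)=0$. Collapsing $\Omega$ in its first vertical slot then yields $\Omega(u, Pv) = \langle \pi_P(Pv), \pi_Q(X_H)\rangle$, so that
\[
\Omega(u, Pv) - \Omega(u, v) = \langle \pi_P(Pv) - \pi_P(X_F),\, \pi_Q(X_H)\rangle,
\]
whence condition (d), $\pi_P(Pv)=\pi_P(X_F)$, gives equality, and condition (a), $Pv = X_F$, trivially implies (d). This reduces the whole theorem to the two atomic conditions (d) and (e), with (a) entailing (d) and (b), (c) entailing (e).

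The step I expect to require the most care is the verticality of $Pv = (\sigma\circ\pi)(X_F)$, on which the entire $\Omega(u, Pv)$ branch (and thus conditions (a) and (d)) rests: $\pi_Q(Pv)=0$ is not a formal consequence of $P$ being idempotent, but of the constraint $A_{qp}=0$ forcing the top-right block of $AA^+$ in (\ref{AA+}) to vanish. I would also verify the sign and slot conventions of $\Omega$ carefully, so that the surviving vertical component is paired with the correct generalized velocity $\dot q = \pi_Q(X_H)$, and confirm that no further nondegeneracy of $\langle\cdot,\cdot\rangle$ is invoked beyond what already defines $f_H$.
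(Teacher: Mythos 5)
Your proposal is correct and takes essentially the same route as the paper's own proof: both rest on Proposition~\ref{lem:3.7} (the identity $\omega(\pi(u),\pi(v)) = \Omega((\sigma\circ\pi)(u), v) = \Omega(u, (\sigma\circ\pi)(v))$), on the verticality of $X_F$ and of $(\sigma\circ\pi)(X_F)$ (the latter indeed coming from $A_{qp}=0$ via (\ref{AA+}), as you flag), and on the pairing formula $\Omega(a, X_F) = \langle f_H, \pi_Q(a)\rangle$ of Proposition~\ref{prop:7.8.8}. The only difference is bookkeeping: you funnel (a), (b), (c) into the two atomic conditions (d) and (e) before concluding, whereas the paper verifies (a), (b), (c), and (e) directly from the lines of (\ref{idea1})--(\ref{idea2}) and records (a) $\Leftrightarrow$ (d) separately; the mathematical content is identical.
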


\begin{proof}
Using (\ref{constantiden}), we obtain
\begin{equation}\label{idea1}
\begin{aligned}
d \tilde H \cdot X_{\tilde F}|_z&=\omega(X_{\tilde H}, X_{\tilde F})|_z = \omega(\pi (X_H(\sigma (z))), \pi (X_F(\sigma (z))))\\
&=\Omega(X_{H}, (\sigma \circ \pi) X_{F})|_{\sigma(z)}\\
&=\Omega((\sigma \circ \pi)X_{H}, X_{F})|_{\sigma(z)}.
\end{aligned}
\end{equation}
If $(a)$ or $(b)$ holds, the second or third line of (\ref{idea1}) would imply that $d \tilde H \cdot X_{\tilde F}|_z =\Omega(X_{H}, X_{F})|_{\sigma(z)}$, which equals $d H \cdot X_F|_{\sigma(z)}$.

Using (\ref{zerocond}), the time derivative of system energy for the reduced system can  be represented by
\begin{equation}\label{idea2}
d \tilde H \cdot X_{\tilde F}|_z=\Omega(X_{H}, (\sigma \circ \pi)(X_{H}+X_{F}))|_{\sigma(z)}=\Omega(X_{H}, (\sigma \circ \pi)(X))|_{\sigma(z)}.
\end{equation}
This implies that $(c)$ is  a sufficient condition to preserves the time derivative of system energy.

Since both $X_F$ and $(\sigma \circ \pi)X_F$ are vertical vector fields, we have $\pi_Q ((\sigma \circ \pi)X_F)=\pi_Q(X_F)=0$. Thus, $(a)$ is equivalent to $(d)$.

Finally, by (\ref{idea1}) and (\ref{fH}), we obtain
\begin{align*}
 d \tilde H \cdot X_{\tilde F}|_z &= \Omega((\sigma \circ \pi)(X_{H}), X_{F})|_{\sigma(z)}=-\Omega(X_{F}, (\sigma \circ \pi)(X_{H}))|_{\sigma(z)} \\
 &= \langle {f_H, \pi_Q((\sigma \circ \pi)(X_{H}))} \rangle|_{\sigma(z)}.
\end{align*}
Thus, if $(e)$ is satisfied, the time derivative of system energy is preserved as well.
\hfill
\end{proof}
\medskip

\subsubsection{Optimization of the basis matrix}
Theorem \ref{thm:preserving} implies that the time derivative of system energy is exactly preserved when the vector fields $X_H$, $X_F$,  $X$, or their vertical/horizontal components  are invariant under the projection operator $\sigma \circ \pi$. Motivated by this,  we formulate five optimization problems corresponding to each individual condition in Theorem \ref{thm:preserving}. All the optimization problems seek to construct a symplectic basis matrix $A$  such that one of the aforementioned vector fields (or their vertical/horizontal components) can lie near the subspace spanned by the column vectors of $A$.

\emph{Condition $(a)$.} The condition $(a)$ can be written as $AA^+X_F=X_F$ in canonical coordinates, which requires that $X_F(Az)\in {\rm{Range}}(A)$ for each $z$. To  satisfy this condition approximately, we can  construct an extended data ensemble,
\begin{equation}\label{snap2}
M_{x,X_F}:=[x(t_1),\ldots,x(t_N), X_F(x(t_1)),\ldots,X_F(x(t_N))],
 \end{equation}
and then  construct a symplectic matrix $A$ to fit each column vector of $M_{x,X_F}$ by solving the following optimization problem:
\begin{equation}\label{opt4}
\begin{aligned}
&{\rm{minimize}} \quad  \|M_{x,X_F}-AA^+ M_{x,X_F}\|_F \\
 &{\rm subject \ to}  \quad  A^TJ_{2n}A=J_{2k} \quad {\rm{and}} \quad A_{qp}=0.
\end{aligned}
\end{equation}

\emph{Condition $(b)$}. Replacing $X_F$ (resp., $M_{x,X_F}$) with $X_H$ (resp., $M_{x,X_H}$) in (\ref{snap2}) and solving (\ref{opt4}) to minimize $M_{x,X_H}$ will yield a symplectic matrix $A$ to  satisfy  $(b)$ approximately.

\emph{Condition $(c)$}. If we replace $X_F$  with $X$ in (\ref{snap2}), then $(c)$ will be approximately satisfied by the similar procedure. In this case, the symplectic matrix $A$ is constructed to fit both the solution snapshots $x$ and time derivative $X$ of $x$ simultaneously to preserve the time derivative of system energy. In previous literature, the analogous idea of incorporating time derivative snapshots \cite{FarhatC:11b,Mohseni:14a} or difference quotients~\cite{KunischK:01a, KunischK:02a, IliescuT:04a}, into the data ensemble has been widely used  to enhance the performance (such as convergence and accuracy) of POD reduced models.

\emph{Condition $(d)$}. In order to approximately satisfy $(d)$, we first construct a data ensemble in $\mathbb{R}^{n\times N}$ for the force field
\begin{equation}
M_{f_H}:=[f_H(x(t_1)),\ldots, f_H(x(t_N))].
 \end{equation}
 Since the vertical vector field $X_F(x)$ can be represented by $X_F(x)=[0; f_H(x)]$, the corresponding data ensemble for the vertical vector field is given by
 \begin{equation}
M_{X_F}:=\begin{bmatrix}
  0 \\
 M_{f_H} \\
\end{bmatrix},
 \end{equation}
 where $M_{X_F} \in \mathbb{R}^{2n\times N}$.  Thus, an optimal value of $A$ can be obtained by minimizing a cost function that is related to the projection error of $x$ and $\pi_P(X_F(x))$. In particular, the cost function can be formulated as
 \begin{equation}\label{MxF}
  \|M_x-AA^+ M_x\|_F+ \gamma \|I_{P} M_{X_F}-I_{P} AA^+ M_{X_F}\|_F,
 \end{equation}
 where $I_p=[0, I_n]\in \mathbb{R}^{n\times 2n}$ is the matrix representation of $\pi_P$, and $\gamma$ is a weighting coefficient to balance the truncation of $M_x$ and $I_{P} M_{X_F}$. Replacing $AA^+$ by (\ref{AA+}) simplifies the cost function (\ref{MxF}) to
 \begin{equation}\label{MxF2}
  \|M_x-AA^+ M_x\|_F+ \gamma \|M_{f_H}-A_{pp}A_{qq}^T M_{f_H}\|_F.
 \end{equation}
 When $\gamma=1$,  the cost functions in (\ref{opt4}) and (\ref{MxF2}) are exactly the same.

\emph{Condition $(e)$}.
In order to satisfy $(e)$ approximately, one can construct a data ensemble that contains $\nabla_p H(x)$, and form an optimization function in terms of $I_Q=[I_n, 0]\in \mathbb{R}^{n\times 2n}$.

While nonconvex nonlinear programming can result in the most optimal subspace to fit an extended data ensemble, considering $A$ is a matrix with $2n\times 2k$ elements, the programming problem can be very expensive and even intractable. Thus, we shall propose a cotangent lift method to obtain a near optimal value of $A$ at a relatively lower cost while simultaneously preserving the time derivative of system energy.

 \subsubsection{Cotangent lift with energy preservation}
The cotangent lift method can simplify all the optimization problems mentioned in the previous section. As an example, we shall give a cotangent lift algorithm to  minimize the cost function (\ref{MxF2}) with $\gamma=1$, such that $(a)$ and $(d)$ can be satisfied.

 The cotangent lift methods requires that $A_{qp}=A_{pq}=\Phi$ and $A_{qq}=A_{pp}=\Phi$, where $\Phi$ is an orthonormal matrix.  Then, $q$, $p$, and $f_H$ have the same status in the cost function, and all the data of $q$, $p$, and $f_H$  should lie near the Range of $A$. As a result, the $k$ columns of $\Phi$ can be obtained from the left singular vectors of the following data ensemble
 \begin{equation}\label{pqfH}
M_{q,p,f_H}:=[q(t_1), \ldots,  q(t_N),  p(t_1), \ldots,  p(t_N), f_H(x(t_1)), \ldots, f_H(x(t_N))].
 \end{equation}

 \begin{algorithm}
\caption{Cotangent lift with energy preservation} \label{alg:lift1}
\begin{algorithmic}
 \REQUIRE
An empirical data ensemble $\{q(t_i), p(t_i), f_H(x(t_i))\}_{i=1}^N$.
\ENSURE A symplectic matrix $A_1$ in block-diagonal form.
\STATE 1: Construct an extended snapshot matrix $M_{q,p,f_H}$ as (\ref{pqfH}).
\STATE 2: Compute the SVD of $M_{q,p,f_H}$ to obtain a POD basis matrix $\Phi$.
\STATE 3: Construct the symplectic matrix $A_1={\rm{diag}}(\Phi, \Phi)$.
\end{algorithmic}
\end{algorithm}

 Algorithm \ref{alg:lift1} lists the detailed procedure for the cotangent lift method for the preservation of the time derivative of system energy. Since this algorithm is based on SVD, the reconstruction error of the time derivative of system energy at $x=\sigma(z)$ can be estimated by the following:
\begin{equation*}
\begin{aligned}
\|d  H \cdot X_{F}|_x-  d \tilde H \cdot X_{\tilde F}|_z\|&=\|\Omega(X_{H}, X_{F})|_x-\Omega(X_{H}, (\sigma \circ \pi)(X_{F}))|_x\| \\
&\le  \|X_H(x)\| \cdot \|X_F(x)-AA^+X_F(x)\|\\
&= \|X_H(x)\| \cdot  \|f_H(x)-A_{pp}A_{qq}^T f_H(x)\|.
\end{aligned}
 \end{equation*}
In a compact subset $M$ of $\mathbb{V}$, we can assume $\|X_H(x)\|$ to be uniformly bounded. If the data set of $f_H(x)$ is representative at the solution trajectory, $\|f_H(x)-A_{pp}A_{qq}^T f_H(x)\|$ is bounded by a constant multiplied by the truncated singular values of (\ref{pqfH}).

\section{Reduction of dissipative Hamiltonian systems}\label{sec:dissipative}
In this section, we discuss a special form of  forced Hamiltonian systems where the vertical vector field is dissipative. We also prove that the proposed model reduction method  preserves the stability of the dissipative Hamiltonian system.

\subsection{Dissipative Hamiltonian systems}
We begin with the definition of a dissipative vector field.

\medskip
\begin{definition}
A vertical vector field $X_F$ on $\mathbb{V}$ is called  dissipative if $dH \cdot X_F|_x \le 0$ for every $x\in\mathbb{V}$.
\end{definition}
\medskip

Proposition \ref{prop:7.8.8} implies that a vertical vector field $X_F$ is dissipative if and only if the corresponding force field $f_H$  satisfies $\langle f_H, \dot q \rangle|_{(q,p)} \le 0$ at all  $(q,p)\in \mathbb{V}$.
\medskip

\begin{definition}
A forced Hamiltonian system (\ref{fom0}) is  dissipative if the vector field can be decomposed as $X=X_H+X_F$, where $X_H$ is a Hamiltonian vector field and $X_F$ is a dissipative vector field.
\end{definition}
\medskip

By (\ref{engvar}), if the vertical vector field $X_F$ is  dissipative, then  $\dot E(t)\le 0$, which means that the system energy is nonincreasing in time.

In the last section, based on the empirical data of $X_F$, $X_H$, $X$, or $f_H$, we have discussed several approaches to extend the snapshot matrix  such that the reduced model can quantitatively preserve the rate of energy dissipation.  In the absence of the empirical data of vector fields, it is still desired for the reduced model to qualitatively preserve the \emph{dissipativity}. This implies that if the original system is dissipative, then the reduced system should remain dissipative.   Fortunately, when the dissipation is Rayleigh dissipation, the aforementioned structure-preserving projection automatically preserves the dissipativity, and this property is independent from the data that is used to construct the basis matrix $A$.


The dissipative force often arises from Rayleigh dissipation function, which can be written as
\begin{equation}
\mathcal{F}(q,\dot q)=\frac{1}{2} \dot q^T R(q) \dot q
\end{equation}
in Lagrangian coordinates, where $R(q)\in \mathbb{R}^{n\times n}$ is a symmetric positive-semidefinite matrix. The force field is then given by $f_L(q,\dot q)=-\nabla_{\dot q} \mathcal{F}(q,\dot q) =- R(q) \dot q$. Using  the Legendre transformation, we obtain $f_H(q,p)=- R(q) \dot q(q,p)$ in Hamiltonian coordinates. Since
$$\langle f_H, \dot q\rangle|_{(q,p)} = -\dot q^T R \dot q|_{(q,p)}  \le 0,$$
this verifies that the corresponding vertical vector field $X_F$ is dissipative.

If the reduced system is constructed by the structure-preserving reduction, then $A_{qp}=0$ and $q=A_{qq}r+A_{qp}s=A_{qq}r$. If follows that  $\dot q=A_{qq} \dot r$. Using (\ref{reduceddiff}), the reduced force field is given by $A^T_{qq} \tilde f_H(r,s)= -A_{qq}^T R(A_{qq} r) A_{qq} \dot r$. Thus, the rate of energy variation of the reduced system at $(r,s)$ is given by
$$\langle  A_{qq}^T \tilde f_H, \dot r\rangle|_{(r,s)} =- \langle  A_{qq}^T R(A_{qq} r) A_{qq} \dot r, \dot r \rangle|_{(r,s)} = -( A_{qq}\dot r)^T R(A_{qq} r) ( A_{qq}\dot r)|_{(q,p)}  \le 0.$$
 This verifies that the reduced system preserves the dissipativity.

Dissipativity preservation often is a strong indicator for stability preservation, as discussed in the next section.

\subsection{Stability preservation}
Let $\mathbb{V}=\mathbb{R}^{2n}$ be a configuration space with the standard topology induced by the Euclidean norm $\|\cdot \|$.  Let $M$ be a subset of $\mathbb{V}$. Then the subspace topology in $M$ is the same as the metric topology obtained by restricting the Euclidean norm $\|\cdot \|$ to $M$. Since the Hamiltonian function $H: \mathbb{V}\to \mathbb{R}$ is continuous, the restriction of $H$ to $M$ gives a continuous function $H_M:M\to \mathbb{R}$.  Throughout this section, we assume that the forced Hamiltonian system is  dissipative, and  the solution $x(t)$ of the system lie in $M$ for every $t\ge 0$. We use $E_0:=H(x_0)$ to denote the system energy at $t=0$.

Let $x(\mathbb{R_+})=\{x(t): t\ge 0\}$ denote the solution trajectory of a dissipative Hamiltonian system. We say the system is \emph{uniformly bounded} if  there exists a closed $r$-ball $B_r:=\{x\in M: \|x\| \le r\}$ centered at $0$ such that $x(\mathbb{R_+}) \subset B_r$.
Under certain conditions, the dissipative Hamiltonian system is uniformly bounded, as the following three lemmas indicate.

\medskip
 \begin{lemma}\label{lem:D}
 Let $D:=H_M^{-1}((-\infty, E_0])=\{x\in M: H_M(x)\le E_0\}$ denote a sublevel set of the Hamiltonian function $H_M:M \to \mathbb{R}$. Let $D_0$ be the connected component of $D$ that contains $x_0$. If $D_0$ is bounded, then  the  dissipative Hamiltonian system is uniformly bounded.
 \end{lemma}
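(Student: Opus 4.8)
The plan is to combine three ingredients: the monotonicity of the system energy along dissipative trajectories, the topological connectedness of the trajectory, and the boundedness hypothesis on $D_0$. The whole argument is essentially topological once the energy estimate is in hand.

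First I would invoke dissipativity. Since the vertical vector field is dissipative, equation (\ref{engvar}) gives $\dot E(t)=dH\cdot X_F|_{x(t)}\le 0$, so $E(t)=H_M(x(t))$ is nonincreasing in $t$. In particular $H_M(x(t))\le H_M(x_0)=E_0$ for every $t\ge 0$, which shows that the entire solution trajectory $x(\mathbb{R}_+)$ lies in the sublevel set $D=\{x\in M: H_M(x)\le E_0\}$.

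Second, I would upgrade the inclusion $x(\mathbb{R}_+)\subseteq D$ to $x(\mathbb{R}_+)\subseteq D_0$. Because $x(t)$ solves an ODE it depends continuously on $t$, so the image $x([0,\infty))$ is the continuous image of the connected set $[0,\infty)$ and is therefore connected. This connected set contains $x_0$ and is contained in $D$; since the connected component $D_0$ is by definition the maximal connected subset of $D$ containing $x_0$, it follows that $x(\mathbb{R}_+)\subseteq D_0$. Finally, the boundedness of $D_0$ closes the argument: setting $r:=\sup_{x\in D_0}\|x\|<\infty$ yields $D_0\subseteq B_r$, hence $x(\mathbb{R}_+)\subseteq D_0\subseteq B_r$, which is precisely the definition of uniform boundedness (and $B_r$ is centered at $0$ as required, since $\|\cdot\|$ is the Euclidean norm).

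The one place that needs care is the second step, namely confining the trajectory to the single component $D_0$ rather than to all of $D$: the sublevel set $D$ may well be disconnected, but a continuous trajectory cannot jump between components, so connectedness pins it to $D_0$. I would also flag the two standing assumptions from the setup that must be in force: the solution is assumed to exist and remain in $M$ for all $t\ge 0$, and $H_M$ is continuous on $M$ so that $D$ is a well-defined (relatively closed) subset of $M$.
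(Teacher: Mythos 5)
Your proof is correct and follows essentially the same route as the paper's: dissipativity confines the trajectory to the sublevel set $D$, continuity of $t\mapsto x(t)$ makes $x(\mathbb{R}_+)$ connected and hence trapped in the component $D_0$, and boundedness of $D_0$ gives uniform boundedness. Your write-up is in fact slightly cleaner, since you correctly attribute the connectedness of the trajectory to continuity of $x(t)$ itself (the paper's phrasing cites continuity of $H\circ x$, which is not the relevant map) and you make explicit the standing assumptions that the solution exists and stays in $M$.
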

\begin{proof}
Since $H\circ x: t  \mapsto H_M(x(t))$ gives a continuous function of $t$, the set $x(\mathbb{R_+})=\{x(t): t\ge 0\}$ is path connected, hence it is connected. Since the forced Hamiltonian system is  dissipative, $H_M(x(t))\le E_0$ for any $t\ge 0$. This implies that $x(\mathbb{R_+}) \subset D$. Since $D_0$ is a connected component of $D$ and $D_0\cap x(\mathbb{R_+})$ contains $x_0$,  the connected set $x(\mathbb{R_+})$  lies entirely within $D_0$. Hence, if $D_0$ is bounded, so is $x(\mathbb{R_+})$.
 \hfill
 \end{proof}
 \medskip

\begin{lemma} \label{lem:U}
  If there exists a bounded neighborhood $U$ of $x_0$ in $M$ such that $E_0 < H_M(x)$  for every  $x$ on the boundary of $U$, then the  dissipative Hamiltonian system  is uniformly bounded.
\end{lemma}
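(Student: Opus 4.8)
The plan is to reduce Lemma~\ref{lem:U} to the already-established Lemma~\ref{lem:D}, whose conclusion I am entitled to invoke once I produce a bounded connected component of the sublevel set. Recall that $D = H_M^{-1}((-\infty, E_0])$ and that $D_0$ is the connected component of $D$ containing $x_0$; by Lemma~\ref{lem:D} it suffices to show that $D_0$ is bounded. The key observation driving the whole argument is that the hypothesis $E_0 < H_M(x)$ on $\partial U$ makes the boundary of $U$ disjoint from $D$: every boundary point has energy strictly above $E_0$ and therefore cannot belong to the sublevel set. Consequently $D_0 \subseteq D$ never meets $\partial U$, and I will use this to trap $D_0$ inside $U$.

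First I would record that, $U$ being a neighborhood of $x_0$ in $M$, the point $x_0$ lies in $\mathrm{int}(U)$, so $x_0 \in D_0 \cap \mathrm{int}(U)$. Next, working entirely in the subspace topology of $M$, I would decompose $M$ as the disjoint union of $\mathrm{int}(U)$, $\partial U$, and the exterior $M \setminus \overline{U}$. Since $D_0$ avoids $\partial U$, the two relatively open sets $D_0 \cap \mathrm{int}(U)$ and $D_0 \cap (M \setminus \overline{U})$ are disjoint, open in $D_0$, and cover $D_0$. Connectedness of $D_0$, combined with the fact that $x_0$ populates the first piece, forces the second to be empty, whence $D_0 \subseteq \mathrm{int}(U) \subseteq U$. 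As $U$ is bounded, $D_0$ is bounded, and Lemma~\ref{lem:D} delivers uniform boundedness immediately.

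The main point requiring care — really the only one — is the topological separation step: I must be explicit about whether interiors, boundaries, and closures are taken in $M$ or in the ambient $\mathbb{V}$, and verify that $\mathrm{int}(U)$ and $M \setminus \overline{U}$ are genuinely open in $M$ so that their traces on $D_0$ form a clopen partition. Once $\partial U \cap D = \emptyset$ is confirmed, this partition argument is standard. A parallel route that bypasses Lemma~\ref{lem:D} is to argue directly on the trajectory: the connected set $x([0,T])$ contains $x_0 \in \mathrm{int}(U)$, so if it ever reached $M \setminus \overline{U}$ it would have to meet $\partial U$ at some $x(t^*)$, forcing $H_M(x(t^*)) > E_0$ and contradicting the monotone decay $H_M(x(t)) \le E_0$ guaranteed by dissipativity; boundedness of $U$ then supplies the enclosing ball $B_r$ directly. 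I would present the reduction to Lemma~\ref{lem:D} as the primary proof and note this trajectory argument as an equivalent alternative.
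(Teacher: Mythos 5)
Your proposal is correct and follows essentially the same route as the paper: both proofs note that $D$ avoids $\mathrm{bd}_M(U)$, use connectedness of $D_0$ together with the clopen partition $\{\mathrm{int}_M(U),\, M\setminus \mathrm{cl}_M(U)\}$ to trap $D_0$ inside the bounded set $U$, and then invoke Lemma~\ref{lem:D}. Your use of $\mathrm{int}(U)$ rather than $U$ itself is a minor (and slightly more careful) variant, and the trajectory-based alternative you sketch is a valid but inessential addition.
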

\begin{proof}
Let ${\rm{bd}}_M (U)$ denote the boundary of $U$ in $M$, and ${\rm{cl}}_M(U)$ denote the closure of $U$ in $M$. Since $H_M(x)> E_0$  for every  $x\in {\rm{bd}}_M (U)$, we have $D\subset M-{\rm{bd}}_M (U)$. Since $U$ and $M-{\rm{cl}}_M(U)$ form a separation of $M-{\rm{bd}}_M (U)$, as a connected set, $D_0$ must lie entirely within either   $U$ or $M-{\rm{cl}}_M (U)$. Since $x_0\in D_0\cap U$, the only possible case is that $D_0 \subset U$. Because $U$ is bounded, so is $D_0$. By Lemma \ref{lem:D}, the  dissipative Hamiltonian system is uniformly bounded.
\hfill
\end{proof}

\medskip
\begin{lemma} \label{cor:S}
If $\mathop {\lim}\limits_{x \to \infty}
  H_M(x)=+\infty$ in $M$, then  the  dissipative Hamiltonian system is uniformly bounded.
\end{lemma}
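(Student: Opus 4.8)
The plan is to deduce this lemma from Lemma~\ref{lem:U} by manufacturing a bounded neighborhood of $x_0$ on whose boundary the energy strictly exceeds $E_0$. The coercivity hypothesis $\lim_{x\to\infty} H_M(x)=+\infty$ unpacks as the statement that for every constant $C$ there is a radius $R$ with the property that $x\in M$ and $\|x\|>R$ together imply $H_M(x)>C$. First I would invoke this with $C=E_0$ to obtain such an $R$, and then fix any radius $R'>\max(R,\|x_0\|)$.

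Next I would set $U:=\{x\in M:\|x\|<R'\}$. This set is open in the subspace topology of $M$, being the intersection of $M$ with an open Euclidean ball; it is bounded by $R'$; and it contains $x_0$ because $\|x_0\|<R'$. Hence $U$ is a bounded neighborhood of $x_0$ in $M$. The key observation is that its boundary relative to $M$ satisfies ${\rm{bd}}_M(U)\subseteq\{x\in M:\|x\|=R'\}$, since ${\rm{cl}}_M(U)\subseteq\{x\in M:\|x\|\le R'\}$ while every point of $M$ with $\|x\|<R'$ already lies in ${\rm{int}}_M(U)$. Consequently each boundary point $x$ has $\|x\|=R'>R$, so $H_M(x)>E_0$ by the choice of $R$.

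With these two facts in hand---namely that $U$ is a bounded neighborhood of $x_0$ in $M$ and that $E_0<H_M(x)$ for every $x\in{\rm{bd}}_M(U)$---the hypotheses of Lemma~\ref{lem:U} are met, and I would conclude immediately that the dissipative Hamiltonian system is uniformly bounded. Since the argument is essentially a reduction, no delicate estimate is needed; the only point deserving genuine care is the description of the boundary of $U$ in the subspace topology, which must be controlled so as to guarantee that every boundary point sits at radius exactly $R'$ (rather than at a smaller radius, where the energy bound would be unavailable). Once that topological detail is pinned down, the conclusion follows from Lemma~\ref{lem:U} without further work.
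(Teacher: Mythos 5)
Your proof is correct, but it takes a different route from the paper's. You reduce Lemma~\ref{cor:S} to Lemma~\ref{lem:U}: coercivity supplies a radius $R$ beyond which $H_M>E_0$, you take $U=\{x\in M:\|x\|<R'\}$ with $R'>\max(R,\|x_0\|)$, and you verify the one genuinely delicate point, namely that ${\rm{bd}}_M(U)\subseteq\{x\in M:\|x\|=R'\}$, so the energy bound holds on all of the relative boundary (including, vacuously, the case where that boundary is empty). The paper instead argues directly by contradiction, with no appeal to Lemmas~\ref{lem:D} or~\ref{lem:U}: if the trajectory were unbounded, there would be times $t_i$ with $\|x(t_i)\|>i$, coercivity would force $H_M(x(t_i))>E_0$ for large $i$, contradicting dissipativity, which keeps $H_M(x(t))\le E_0$ for all $t\ge 0$. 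Your reduction buys economy and makes the logical architecture of the section explicit --- Lemma~\ref{cor:S} really is a special case of Lemma~\ref{lem:U}, which the paper's presentation obscures --- at the cost of importing the point-set topology of relative boundaries and the connectedness argument hidden inside Lemmas~\ref{lem:U} and~\ref{lem:D}. The paper's argument is more elementary and self-contained: it needs only the trajectory, coercivity, and monotonicity of the energy, and sidesteps relative-boundary bookkeeping entirely. Both proofs are sound.
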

\begin{proof}
Suppose the    system is not uniformly bounded. Then there exists an increasing  sequence of time $\{t_1, t_2, \ldots\}$ such that $ \|x(t_i)\|>i$ for each $i\in \mathbb{N}_+$. By assumption, $H_M(x) \to +\infty$ as $x\to \infty$. Thus, for any $E_0\in \mathbb{R}$, there exists an $n\in\mathbb{N}_+$ such that as long as $\|x\|>n$, $H_M(x)>E_0$. This implies that $H_M(x(t_i))>E_0$ for every $i\ge n$. But if the system is dissipative, we must have $H_M(x(t_i)) \le E_0$, which is a contradiction.
\hfill
\end{proof}
\medskip

\begin{remark}
\emph{If $E_0$ is a regular value of $H_M:M\to \mathbb{R}$, then the level set $H_M^{-1}(E_0)$ is an embedded codimension-1 submanifold in $M$ by the regular value theorem, and the sublevel set $D$ is an embedded codimension-0 submanifold with boundary in $M$~\cite{LeeJM:12a} (pp. 120--121).}
\end{remark}
\medskip

Let $M=\mathbb{V}$. Then, Lemmas \ref{lem:D}, \ref{lem:U}, and  \ref{cor:S} imply that under certain conditions, the original  dissipative Hamiltonian system is bounded. Moreover, if we respectively replace $x_0$ and $E_0$  by $x(t_1)$ and $H(x(t_1))$ for some $t_1\in \mathbb{R}$, these lemmas still hold.

Next, we consider boundedness  of the structure-preserving reduced model. Suppose that the reduced  system remains  dissipative, $x_0\in {\rm {Range}}(A)$, and the initial condition of the reduced system is given by $z_0=A^+x_0$. Let $M= {\rm {Range}}(A)$.
Then, Lemmas \ref{lem:D}, \ref{lem:U}, and  \ref{cor:S} imply that under the same conditions, the reduced  dissipative Hamiltonian system preserves the boundedness. In particular, in Lemma \ref{lem:D}, if the connected component $D_0$ of $H^{-1}((-\infty, E_0])$ in $\mathbb{V}$ is bounded, then the connected component of $D_0\cap M$ that contains $x_0$ is bounded in $M$.
In Lemma \ref{lem:U}, if there exists a bounded neighborhood $U$ of $x_0$ in $\mathbb{V}$ such that $E_0 < H(x)$  for every  $x\in {\rm{bd}}_\mathbb{V} (U)$, then $U_M:=U \cap M$ is a neighborhood of $x_0$ in $M$ and is bounded in $M$. Moreover, since ${\rm{bd}}_M (U_M)\subset {\rm{bd}}_\mathbb{V} (U)\cap M$, $E_0<H (x)$ for every $x\in {\rm{bd}}_M (U_M)$.
In Lemma \ref{cor:S}, if $\mathop {\lim}\limits_{x \to \infty}  H(x)=+\infty$ in $\mathbb{V}$, then $\mathop {\lim}\limits_{x \to \infty} H_M(x)=+\infty$ in $M$.

Under the assumptions of Lemmas \ref{lem:D}, \ref{lem:U}, and  \ref{cor:S},  we have proved that the boundedness of the original and the reduced  systems is consistent. In dynamical systems, boundedness is often accompanied with stability. An equilibrium point $x_*$ of a dynamical system  is \emph{Lyapunov stable} if for every neighbourhood $U$ of $x_*$, there exists a neighbourhood $V\subset U$ such that if $x_0 \in V$, then $x(t)\in U$ for every $t\ge 0$. When the sysetem is linear and  uniformly bounded, it is marginally stable in the sense of Lyapunov. If the original forced Hamiltonian system is linear, then the reduced system constructed by the structure-preserving projection is also linear. Thus, if any assumption in the previous lemmas holds,  both the original and reduced systems are Lyapunov stable.

 \medskip
 \begin{theorem}\label{minimal}
 Let $M$ be a closed subset of $\mathbb{V}$. If $x_*$ is a strict local minimum of $H_M$ in $M$, then $x_*$ is a stable equilibrium for  the dissipative Hamiltonian system.
 \end{theorem}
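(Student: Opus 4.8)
The plan is to use the energy $H_M$ as a Lyapunov function, exploiting that dissipativity makes $t \mapsto H_M(x(t))$ nonincreasing along every trajectory. First I would record the two facts supplied by the earlier development: $H_M : M \to \mathbb{R}$ is continuous, and for a dissipative system $\dot E(t) = dH \cdot X_F|_{x(t)} \le 0$, so that $H_M(x(t)) \le H_M(x_0)$ for every $t \ge 0$ with $x(t)$ remaining in $M$. Because $x_*$ is a \emph{strict} local minimum, there is a neighborhood $W$ of $x_*$ in $M$ with $H_M(x) > H_M(x_*)$ for all $x \in W \setminus \{x_*\}$; this strictness is exactly what will let energy sublevel sets trap trajectories.

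For the equilibrium claim I would argue by contradiction: if $X(x_*) \ne 0$, the solution issuing from $x_0 = x_*$ has $\dot x(0) = X(x_*) \ne 0$, so $x(t) \ne x_*$ for small $t > 0$ while still lying in $W$; strict minimality then forces $H_M(x(t)) > H_M(x_*)$, contradicting the dissipative inequality $H_M(x(t)) \le H_M(x_*)$. Hence $X(x_*) = 0$ and $x_*$ is a genuine equilibrium. This argument invokes neither a first-order condition nor interiority, so it also covers the case where $x_*$ lies on the boundary of the closed set $M$.

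For Lyapunov stability, fix an arbitrary neighborhood $U$ of $x_*$ in $M$. I would choose $\rho > 0$ small enough that the closed set $\bar B_\rho(x_*) \cap M$ lies in $U \cap W$, and set $S_\rho := \{x \in M : \|x - x_*\| = \rho\}$. Since $S_\rho$ is closed and bounded it is compact, so $H_M$ attains a minimum $m := \min_{x \in S_\rho} H_M(x)$, and $S_\rho \subset W \setminus \{x_*\}$ yields $m > H_M(x_*)$ (with the convention $m = +\infty$ when $S_\rho = \emptyset$). Picking $c$ with $H_M(x_*) < c < m$ and setting $V := \{x \in M : \|x - x_*\| < \rho,\ H_M(x) < c\}$ gives an open neighborhood of $x_*$ in $M$ with $V \subset U$.

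The trapping argument then closes the proof. For $x_0 \in V$ the map $\phi(t) := \|x(t) - x_*\|$ is continuous with $\phi(0) < \rho$, while $H_M(x(t)) \le H_M(x_0) < c$ for all $t$ by dissipativity. Were $\phi(t) \ge \rho$ at some time, the intermediate value theorem would furnish a first crossing time $t_1$ with $x(t_1) \in S_\rho$, whence $H_M(x(t_1)) \ge m > c$, contradicting $H_M(x(t_1)) < c$. Thus $x(t) \in \bar B_\rho(x_*) \cap M \subset U$ for every $t \ge 0$, which is precisely Lyapunov stability. I expect the main obstacle to be topological rather than dynamical: making the energy-well construction rigorous in the subspace topology of an arbitrary closed $M$, namely securing compactness of $S_\rho$ together with the strict gap $m > H_M(x_*)$, and cleanly absorbing the degenerate case $S_\rho = \emptyset$ (where confinement follows from the intermediate value theorem alone, since no point of the trajectory can realize distance exactly $\rho$).
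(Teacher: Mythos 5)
Your proof is correct, and it follows the same underlying Lyapunov ``energy-well'' strategy as the paper: surround $x_*$ with a compact barrier set on which $H_M$ exceeds $H_M(x_*)$ by a definite gap $m - H_M(x_*) > 0$, then observe that a trajectory starting with energy below that gap can never reach the barrier, since dissipativity makes $t \mapsto H_M(x(t))$ nonincreasing. The implementations differ in two ways worth noting. First, the paper takes the barrier to be $\mathrm{bd}_M(W \cap U)$ and closes the confinement step by invoking its Lemma~\ref{lem:U}, whose proof runs through connectedness: the trajectory is a connected set, the sublevel set misses the boundary, so a separation argument traps the component containing $x_0$ inside $W \cap U$. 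You instead take the metric sphere $S_\rho = \{x \in M : \|x - x_*\| = \rho\}$ as the barrier and trap the trajectory directly by a first-crossing-time argument, applying the intermediate value theorem to $t \mapsto \|x(t)-x_*\|$; this makes your stability argument self-contained (no appeal to the sublevel-set lemmas), makes compactness of the barrier transparent (closed sphere intersected with the closed set $M$), and you correctly flag and absorb the degenerate case $S_\rho = \emptyset$. Both arguments use closedness of $M$ at exactly this compactness step. Second, you prove something the paper silently omits: that $x_*$ is in fact an equilibrium, i.e.\ $X(x_*) = 0$, via the contradiction that a nonzero initial velocity would push the solution into the punctured neighborhood $W \setminus \{x_*\}$ where strict minimality forces $H_M(x(t)) > H_M(x_*)$, violating dissipativity. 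The paper's proof establishes only the trapping property (Lyapunov stability of the point), so your version is strictly more complete with respect to the theorem as stated.
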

 \medskip

 \begin{proof}
  Since $x_*$ is a strict local minimum of $H_M$, then there exits a neighbourhood $W$ of $x_*$ such that $H_M(x)> H_M(x_*)$ for every $x\in {\rm{cl}}_M(W)-\{x_*\}$. Assume $W$ is bounded in $M$, otherwise,  replace $W$ by $W\cap B_r(x_*)$ for an open $r$-ball $B_r(x_*)$  centered at $x_*$. Let $U$ be an arbitrary neighborhood of $x_*$ in $M$.  Since both $W$ and $U$ are open, so is $W\cap U$. Let $U_0=W\cap U$.   Since ${\rm{bd}}_M (U_0) \subset {\rm{cl}}_M (W)$, ${\rm{bd}}_M (U_0)$ is bounded in $M$, and hence also bounded  in $\mathbb{V}$. Since $M$ is closed in $\mathbb{V}$,  ${\rm{bd}}_M(U_0)$ is also closed in $\mathbb{V}$. As a bounded and  closed subset of $\mathbb{V}$, ${\rm{bd}}_M(U_0)$ is compact. By the extreme value theorem, there exits $x_1 \in {\rm{bd}}_M(U_0)$ such that $H_M(x_1) \le H_M(x)$ for every $x\in {\rm{bd}}_M(U_0)$.
   Since $H_M$ is continuous, the preimage $D:=H_M^{-1}((-\infty, H_M(x_1)))$ of $(-\infty, H(x_1))$ is open  in $M$. Since $x_1\in {\rm{cl}}_M(W)$ and $x_1\ne x_*$, we have $H_M(x_*)<H_M(x_1)$, which implies that $x_*\in D$. Thus, $V:=U_0\cap D$ is  a neighbourhood of $x_*$ in $M$.  If $x_0\in V$, then $H_M(x_0)<H_M(x_1)\le H_M(x)$ for every $x\in {\rm{bd}}_M (U_0)$. This implies that $x(\mathbb{R}_+)\subset U_0\subset U$, by Lemma \ref{lem:U}. Therefore,  $x_*$ is a stable equilibrium for  the dissipative Hamiltonian system.
   \hfill
 \end{proof}
 \medskip

Let $M=\mathbb{V}$. Suppose $U$ is a neighborhood of $x_*$ in $\mathbb{V}$, and $x_*$   is the minimum  of $H$ in $U$. Then, Theorem \ref{minimal} implies that the full model is stable at $x_*$. Now, let $M={\rm{Range}}(A)$. It immediately follows that $x_*$   is also the minimum of $H_M$ in $U_M$, where $U_M= U \cap M$. It follows that $x_*$ is also the stable equilibrium of the reduced Hamiltonian system on ${\rm{Range}}(A)$. Therefore, the stability of the full and reduced dissipative Hamiltonian systems  is consistent. For both the full model and the structure-preserving reduced model,  $H_M$ can be considered a Lyapunov function for the system. Nevertheless, a POD reduced system is not guaranteed to be dissipative and stable, and therefore, there is no corresponding Lyapunov function.

While both the structure-preserving method and the POD-Galerkin method  construct reduced equations in some low dimensional subspaces, only the structure-preserving method  can preserve  the forced-Hamiltonian structure.   The PSD algorithm can be used to construct a symplectic matrix $A$, which is an analogous to POD that constructs an orthonormal basis matrix $\Phi$. Evolving a  PSD reduced system  by a symplectic integrator can capture the energy variation and preserve the stability. By contrast, even if a POD subspace can fit the empirical data with good accuracy, a POD reduced system can be unstable. To this end, one can distinguish between a numerically reduced system and a physically reduced system. Table \ref{tab:podvspsd} compares the POD-Galerkin method with the proposed structure-preserving  model reduction  method; it serves as a short summary of sections \ref{sec:twoapp}--\ref{sec:dissipative}.

\begin{table}
\begin{center}
\caption{The POD-Galerkin method vs. the structure-preserving  model reduction method.}
 \label{tab:podvspsd}
 \resizebox{\textwidth}{!}{%
\begin{tabular}{|c|c |c|c|}
\hline
&  POD-Galerkin   & \multicolumn{2}{c|}{Structure-preserving model reduction}\\
\hline
Original system &  \begin{tabular}{@{}c@{}} General ODE system: \\ $\dot x=f(x)$ with $x\in \mathbb{R}^n$  \end{tabular} &  \multicolumn{2}{c|}{ \begin{tabular}{@{}c@{}} Forced Hamiltonian system: \\ $\dot x= X_H(x)+ X_F(x)$ with  $x\in \mathbb{R}^{2n}$  \end{tabular}  } \\
\hline
\begin{tabular}{@{}c@{}} Physical laws of \\ the original system \end{tabular}
 &  Newton's Law   &  \begin{tabular}{@{}c@{}} Integral d'Alembert's \\ principle \end{tabular} & \begin{tabular}{@{}c@{}}  Local d'Alembert's \\ principle \end{tabular} \\
\hline
Reduced state & \begin{tabular}{@{}c@{}} Orthogonal projection: \\   $z=\Phi^T x\in \mathbb{R}^k$ \end{tabular}
& \multicolumn{2}{c|}{ \begin{tabular}{@{}c@{}}  Symplectic projection:   \\  $z=A^+ x\in \mathbb{R}^{2k}$ \end{tabular}}  \\
\hline
 Reduced system &  \begin{tabular}{@{}c@{}} Reduced ODE system: \\  $\dot z =\Phi^T f(\Phi z)$ \end{tabular} & \multicolumn{2}{c|}{\begin{tabular}{@{}c@{}} Reduced forced Hamiltonian system: \\ $\dot z = X_{\tilde H}(z)+X_{\tilde F}(z)$\end{tabular} }       \\
  \hline
Reduction approach & Galerkin projection   & Variational principle & Structure-preserving projection \\
  \hline
  \begin{tabular}{@{}c@{}} Physical laws of \\ the reduced system \end{tabular}
 &  N/A  &  \begin{tabular}{@{}c@{}} Integral d'Alembert's \\ principle \end{tabular} & \begin{tabular}{@{}c@{}}  Local d'Alembert's \\ principle \end{tabular} \\
 \hline
 Basis matrix & Orthonormal: $\Phi^T \Phi=I_k$ &  \multicolumn{2}{c|}{Symplectic: $A^TJ_{2n}A=J_{2k}$ and $A_{qp}=0$}\\
 \hline
 \begin{tabular}{@{}c@{}} Basis matrix \\ construction method  \end{tabular}
 & POD: SVD &  \multicolumn{2}{c|}{PSD: Cotangent lift } \\
 \hline
  Dissipativity &  N/A &   \multicolumn{2}{c|}{Dissipativity preservation} \\
 \hline
 Stability &  N/A &   \multicolumn{2}{c|}{Stability preservation} \\
 \hline
\end{tabular}}
\end{center}
\end{table}

\section{Numerical validation} \label{sec:numerical_diss}
 In this section, the performance of the proposed structure-preserving model reduction method is illustrated in numerical simulation of a linear dissipative wave equation. Our goal is  to demonstrate that PSD can deliver a low-dimensional reduced system while preserving the stability of the original system.

\subsection{Hamiltonian formulation of dissipative wave equations}
Let $u=u(t,x)$.  Consider a one-dimensional linear wave equation with constant damping coefficient $\beta$, undamped angular frequency $\omega_0$, and moving speed $c$,
\begin{equation}\label{lwave}
  u_{tt} +\beta u_t -  c^2   u_{xx}+ \omega_0^2 u=0,
\end{equation}
on space $x\in[0,l]$. With the generalized coordinates $q=u$ and the generalized momenta $p= u_t$, the Hamiltonian PDE associated with (\ref{lwave}) is given by
\begin{equation}\label{qp}
 \dot q= \frac  {\delta {H}}{\delta p}, \qquad
  \dot p= -\frac {\delta  {H}}{\delta q}-\beta p,
\end{equation}
where the Hamiltonian is defined as
  \begin{equation}\label{hamiqp}
 H(q,p) = \int_0^l {dx\left[ \frac{1}{2}p^2 + \frac{1}{2} \omega_0^2 q^2+ \frac{1}{2}c^2q_x^2 \right]}.
  \end{equation}

A fully resolved model of (\ref{qp}) can be constructed by a structure-preserving finite difference discretization~\cite{BridgesTJ:06a}. In particular, with $n$ equally spaced grid points, the spatially discretized Hamiltonian with periodic boundary conditions is given by
\begin{equation}\label{waveeng}
H_d(y) = \frac{\Delta x}{2} \sum\limits_{i = 1}^n p_i^2 + \frac{\omega_0^2 \Delta x}{2} \sum\limits_{i = 1}^n q_i^2 +  \frac{c^2}{2\Delta x}\sum\limits_{i = 1}^n (q_i - q_{i - 1})^2,
\end{equation}
where $x_i = i\Delta x$, $q_i= u(t,x_i)$, $q_0=q_n$, $p_i= u_t(t,x_i)$, and $y =[q_1; \ldots; q_n; p_1; \ldots; p_n]$. With $n\Delta x=l$, (\ref{waveeng}) converges to (\ref{hamiqp}) in the limit $\Delta x \to 0$. Now, we have a Hamiltonian ODE system,

\begin{equation}\label{wavedis}
\frac{{{\rm{d}}{y}}}{{{\rm{d}}t}} = {J_d}\nabla_y {H_d}+X_F,
\end{equation}
where ${J_d} =  J_{2n} / {\Delta x}$, and $X_F=[0; \ldots; 0; -\beta p_1; \ldots; -\beta p_n]$.
Let $D_{xx}\in \mathbb{R}^{n\times n}$ denote the three-point central difference approximation for the spatial derivative $\partial_{xx}$. We define a Hamiltonian matrix $K$ and a dissipative matrix $L$ by
 \begin{equation}\label{wavematrix}
 K=
 \begin{bmatrix}
 0_n & I_n \\  c^2 {D}_{xx}-\omega_0^2 I_n & 0_n
 \end{bmatrix},
 \quad
  L=
 \begin{bmatrix}
 0_n & 0_n \\  0_n & -\beta I_n
 \end{bmatrix}.
 \end{equation}
 Then, (\ref{wavedis}) can be written in the form
\begin{equation}\label{expansion}
\dot y=Ky+Ly.
\end{equation}
  Time discretization of (\ref{expansion}) can be achieved by using an implicit symplectic integrator scheme based on mid-point rule~\cite{HairerE:06a, McLachlanRI:06a}.

\subsection{Numerical results}
For our numerical experiments, we  study the one-dimensional dissipative wave equation with  periodic boundary conditions defined in (\ref{lwave}). Let  $s(x)=10\times |x-{\frac{1}{2}} |$; and let $h(s)$ be a cubic spline function:
\begin{equation*}
\begin{array}{l}
h(s)=\left\{
\begin{array}{ccl}
 \vspace{3pt} 1-\frac{3}{2}s^2+\frac{3}{4}s^3&  \text{if }&  0\le s \le 1 \\
 \frac{1}{4}(2-s)^3&  \text{if }&  1<s\le2 \\ 0&  \text{if }&  s>2
\end{array} \right. .
\end{array}
\end{equation*}
The initial condition is provided by
\begin{equation}\label{initial}
q(0)=[h(s(x_1));\ldots; h(s(x_n))], \quad p(0)=0_{n\times 1},
\end{equation}
which gives rise to a dissipative system with wave propagating in both directions of $x$ and then bouncing back.  The full model  is computed using the following parameter set:
\medskip
\begin{center}
\begin{tabular}{r||l}
  \hline
  Size of the space domain & $l=1$ \\
  Number of grid points & $n=500$ \\
  Space discretization step & $\Delta x=l/n=0.002$ \\
  Final time & $T=50$\\
  Time discretization step & $\delta t=0.01$ \\
  Damping coefficient & $\beta=0.1$ \\
  Undamped angular frequency & $\omega_0=0.05$\\
  Wave speed & $c=0.1$ \\
  \hline
\end{tabular}
\end{center}
\medskip
The reduced PSD model is constructed through the cotangent lift method based on the extended snapshot matrix (\ref{exsnap}) that contains snapshots of $q(t)$ and $p(t)$. Since $f_H=-\beta p$, this extended snapshot matrix can also discover the dominant modes of $f_H$, and therefore approximately preserves the system energy. Since (\ref{lwave}) is linear, we can also obtain the analytical solution by the eigenfunction expansion method. The analytical solution is used as the reference benchmark solver to measure the error of the full model as well as POD and PSD reduced models.

Figure \ref{fig:wavepod}(a) plots several snapshots of the solution profile from $t = 0$ to $t = 10$. The empirical data ensemble takes $101$ snapshots from the full model with uniform interval ($\Delta t=0.5$). We first compare PSD with the full model. The lines show the results from the full model  and the symbols show the results from the PSD reduced model with 20 modes. For all snapshots, the PSD reduced system obtains good results that match the full model very well.  Figure \ref{fig:wavepod}(b) shows the singular values  corresponding to the first 80 POD and PSD modes. A fast decay of singular values indicates a fast convergence of low-dimensional data to fit the original data with respectic to the $L^2$ norm.   Since POD  is designed to minimize the projection error of the data snapshots in least-squares sense, for a fixed dimension, no other linear projection method can provide better data approximation with the $L^2$ norm. With the symplectic constraint, we do observe that the cotangent lift requires more modes to fit the empirical data than POD in order to obtain the same accuracy.

\begin{figure}
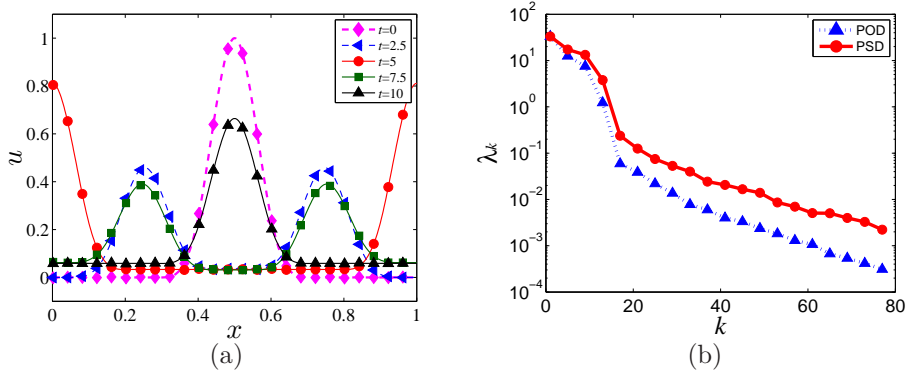

\begin{center}
\begin{minipage}{0.48\linewidth} \begin{center}
\includegraphics[width=1\linewidth]{./FIGS/snap}
\end{center} \end{minipage}
\begin{minipage}{0.48\linewidth} \begin{center}
\includegraphics[width=1\linewidth]{./FIGS/SVD}
\end{center} \end{minipage}\\
\begin{minipage}{0.48\linewidth}\begin{center} (a) \end{center}\end{minipage}
\begin{minipage}{0.48\linewidth}\begin{center} (b) \end{center}\end{minipage}
 \caption{(Color online.) (a) The solution $u(t,x)$ at $t=0, 2.5, 5, 7.5, 10$ of the dissipative wave equation. The lines represent the results from the full model based on 500 grid points and the symbols represent the results from the PSD reduced model with 20 modes.  (b) The singular values $\lambda_k$ corresponding to the first $k=1, 2, \ldots, 80$ POD and PSD modes.
} \label{fig:wavepod}
 \vspace{-3mm}
\end{center}
\end{figure}

\begin{figure}
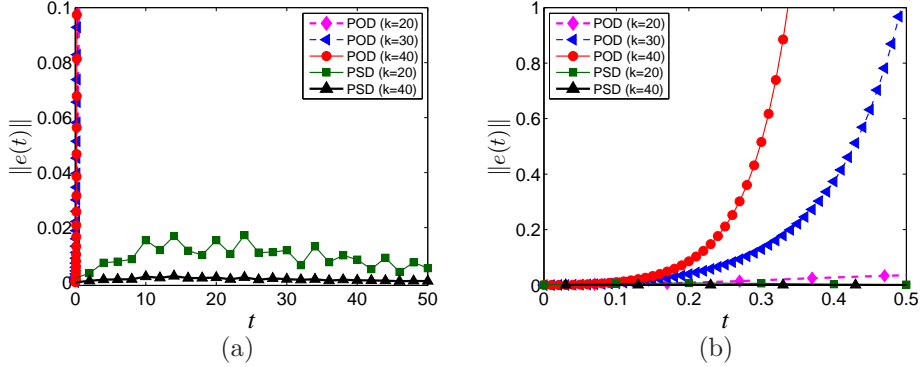

\begin{center}
\begin{minipage}{0.48\linewidth} \begin{center}
\includegraphics[width=1\linewidth]{./FIGS/errl}
\end{center} \end{minipage}
\begin{minipage}{0.48\linewidth} \begin{center}
\includegraphics[width=1\linewidth]{./FIGS/errs}
\end{center} \end{minipage}\\
\begin{minipage}{0.48\linewidth}\begin{center} (a) \end{center}\end{minipage}
\begin{minipage}{0.48\linewidth}\begin{center} (b) \end{center}\end{minipage}
 \caption{(Color online.)   Comparison between POD and PSD (cotangent lift) reduced systems of the dissipative wave equation.  (a) The evolution of the $L^2$ error norm, $\|e(t)\|:=\|\hat u(t)-u(t)\|$, between  the benchmark solution $u(t)$ and approximating solutions $\hat u(t)$   for the time domain $t\in[0, 50]$. (b) The  $L^2$ error norm $\|e(t)\|$ for the zoomed in time interval  $t\in[0, 0.5]$.
} \label{fig:waveerr}
 \vspace{-3mm}
\end{center}
\end{figure}

\begin{figure}
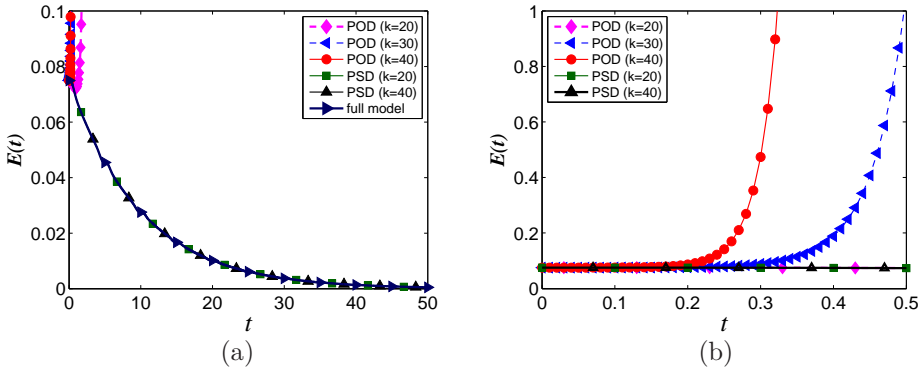

\begin{center}
\begin{minipage}{0.48\linewidth} \begin{center}
\includegraphics[width=1\linewidth]{./FIGS/engl}
\end{center} \end{minipage}
\begin{minipage}{0.48\linewidth} \begin{center}
\includegraphics[width=1\linewidth]{./FIGS/engs}
\end{center} \end{minipage}\\
\begin{minipage}{0.48\linewidth}\begin{center} (a) \end{center}\end{minipage}
\begin{minipage}{0.48\linewidth}\begin{center} (b) \end{center}\end{minipage}
 \caption{(Color online.)  Comparison between full model,  POD reduced system, and PSD (cotangent lift) reduced system of the dissipative wave equation.
(a) The evolution of the system energy $E(t)$  for the time domain $t\in[0,50]$. (b) The  system energy $E(t)$ for the zoomed in time interval $t\in[0, 0.5]$.
} \label{fig:waveeng}
 \vspace{-3mm}
\end{center}
\end{figure}

\begin{figure}
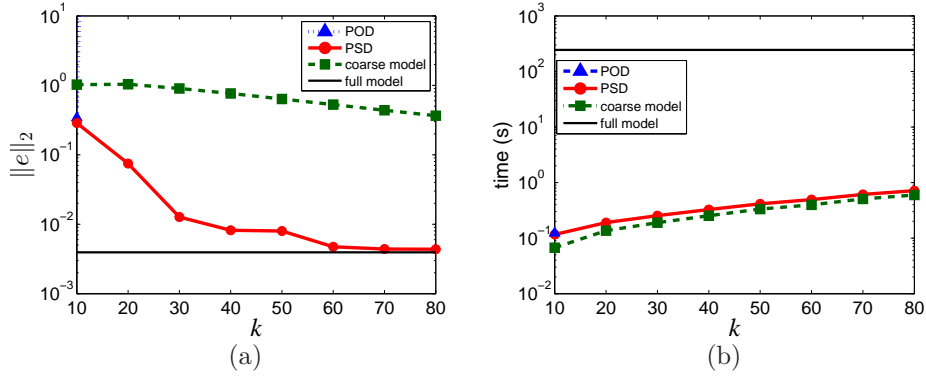

\begin{center}
\begin{minipage}{0.48\linewidth} \begin{center}
\includegraphics[width=1\linewidth]{./FIGS/err2}
\end{center} \end{minipage}
\begin{minipage}{0.48\linewidth} \begin{center}
\includegraphics[width=1\linewidth]{./FIGS/time}
\end{center} \end{minipage}\\
\begin{minipage}{0.48\linewidth}\begin{center} (a) \end{center}\end{minipage}
\begin{minipage}{0.48\linewidth}\begin{center} (b) \end{center}\end{minipage}
 \caption{(Color online.) Comparison between the full model, coarse model,  POD  reduced system, and PSD (cotangent lift) reduced system for the dissipative wave equation.
  (a) The $L^2$ norm of the total error $\|e\|_2:=\sqrt {\int_0^T {{{\left\| {e(t)} \right\|}^2}{\rm{d}}t} }$ of different systems. For the POD reduced system, we only compute $\|e\|_2$ with 10 modes; when the subspace dimension $k$ is greater than 10, the POD reduced system blows up in the whole time domain [0, 50] and $\|e\|_2$ becomes infinite. (b) The running time of different systems corresponding to different subspace dimensions $k$. All data comes from the average value of 10 independent runs.
} \label{fig:waveerr2}
 \vspace{-3mm}
\end{center}
\end{figure}

However, preserving the data does not necessarily imply preserving the dynamics. With more modes, there is no guarantee that the POD reduced system will yield more accurate solutions. As Figure \ref{fig:waveerr} indicates, the $L^2$ error norm of the POD reduced system increases exponentially when it has 20, 30, or 40 modes. In addition, the POD reduced system with 40 modes blows up faster than the POD system with 20 modes. This result verifies that POD can yield unstable reduced systems, even though the original system is dissipative and stable.  By contrast, PSD reduced systems have small numerical errors in the $L^2$ norm for all the tested cases. Figure \ref{fig:waveeng} shows that PSD reduced models accurately capture  the evolution of the system energy $E$ of the dissipative wave equation, while the energy of POD reduced systems quickly grows to infinity. In this example, increasing the number of POD modes actually causes the system energy to increase at a faster rate. Here, $E$ equals the discretized Hamiltonian $H_d(y)$.

Figure \ref{fig:waveerr2}(a) plots the $L^2$ norm of  the total error of different systems over the whole time domain [0, 50].  We compare the full model (with $k=1000$), coarse model, as well as POD and PSD reduced model. The subspace dimension $k$ of the coarse model and reduced models ranges from 10 to 80. The $L^2$ norm of  the total error of the POD reduced system is bounded  only when $k$ equals 10 for all the tested cases .  While the PSD reduced system show some numerical error, this error quickly converges to the error of the full model. The coarse model also preserves the forced Hamiltonian structure and remains stable, but the numerical error of the coarse model reduces at a low rate with increased modes. Figure \ref{fig:waveerr2}(b) shows the running time of different methods. We find coarse model and POD/PSD reduced model have similar running speed. With 80 modes,  both the coarse model and the reduced model can significantly improve the computational efficiency and reduce the running time by more than two orders of magnitude.

\subsection{Stability analysis}
Using the numerical results, we further analyze the stability for the linear system in  (\ref{lwave}). Using (\ref{waveeng}), we know $\mathop {\lim}\limits_{x \to \infty}  H_d(x)=+\infty$; Lemma \ref{cor:S} implies that the full model is uniformly bounded. Since the origin is the strict minimum of $H_d$, Theorem \ref{minimal} implies that the origin is a stable equilibrium for  the dissipative wave equation. Since the external force of (\ref{lwave}) is a Rayleigh dissipative force, where $\mathcal{F}(q, \dot q)=\frac{1}{2} \dot q^T \dot q$, the reduced PSD system is also dissipative. By the same argument, the reduced PSD system is uniformly bounded, and also has the origin as a stable equilibrium.

To explain why the POD reduced system is unstable, we study the eigenvalues of the linear wave equation. According to \cite{LeVequeRJ:07a}, the eigenvalues $\beta _i$ ($i= 1, \ldots, n$) of the discretized spatial derivative ${D}_{xx}$ with periodic boundary conditions are given by
\[ \beta _i = -\frac{2}{\Delta x^2} \left[   1- \cos \left( \frac{2\pi i}{n} \right) \right].\]
It follows that the eigenvalues of the full model $K+L$ in (\ref{expansion}) are given by $2n$ complex numbers $\{\lambda_i\}_{i=1}^{2n}$, where $\lambda_i, \lambda_{i+n}$ are solutions of $\lambda^2+\beta \lambda -c^2 \beta_i+\omega_0^2=0$ for $i=1, \ldots, n$. It can be verified that all the eigenvalues of the full model have negative real parts, which means the full model is stable.

Since POD does not preserve the system energy, there are no mechanisms to confine the solution in a bounded region.  As a result, the reduced system may blow up with time evolution.  To corroborate this claim, let $\Phi$ denote a POD basis matrix, $\lambda_*$ denote the eigenvalue of $\Phi^T (K+L) \Phi$ with the maximal real part, and $\xi_*$ denote the corresponding eigenvector with unit length.  Then, $a_*=\xi_*^T y_0$ gives the projection coefficient of $y_0$ onto $\xi_*$. Since the solution of a linear system has an exponential term $a_* \exp(\lambda_* t)\xi_*$, the POD reduced system is unstable when $a_*\ne 0$ and $\rm{Re}(\lambda_*)>0$.

Table \ref{tab:podeigv} lists $\rm{Re}(\lambda_*)$ with a wide range of  diffusion coefficients $\beta$ and subspace dimensions $k$. Numerical results show that $a_*\ne 0$ for all the tested cases. When $\beta=10^2$, the diffusion term becomes dominant in (\ref{expansion}). The POD reduced system is stable when $k=10$ and $k=20$ for the tested cases. When $10^{-2}\le \beta\le 10^1$, The POD reduced system is stable only when $k=10$.  When $\beta=10^{-3}$, the diffusion term becomes negligible in (\ref{expansion}) and the POD reduced system is unstable for all the tested cases.
Table \ref{tab:podeigv} also shows that when $\beta=10^{-1}$, ${\rm{Re}}(\lambda_*)$ with 40 modes is much larger than ${\rm{Re}}(\lambda_*)$ with 20 modes, which explains why the POD reduced system with 40 modes blows up faster than the system with 20 modes in Figure \ref{fig:waveerr}.

\begin{table} [htbp]
\begin{center}
\caption{The real part $\rm{Re}(\lambda_*)$ of the eigenvalue corresponding to the most unstable POD mode  for different diffusion coefficients $\beta$ and subspace dimensions $k$.}
 \label{tab:podeigv}
\begin{tabular}{c|cccccccc}
& \multicolumn{8}{c}{$k$} \\
\cline{2-9}
 $\beta$ &  10&	20&	30&	40&	50&	60&	70 &80 \\
\hline
$10^{-3}$&  $5.17 \times 10^{-3}$&	0.304&	15.1&	19.9&	16.7&	17.4&	19.6& 111\\
$10^{-2}$&  $-3.71 \times 10^{-3}$& 0.252 &	15.6&	20.2&	17.0&	17.9&	19.7& 113\\
$10^{-1}$&  $-2.09 \times 10^{-2}$&	1.26&	12.3&	18.0&	21.8&	20.5&	22.3& 129\\
$10^0$&     $-2.51 \times 10^{-3}$&	1.43&	31.4&	37.2&	26.3&	60.3&	44.6& 139\\
$10^1$&     $-2.50 \times 10^{-4}$&	1.08&	18.7&	26.7&	38.0&	43.1&	47.8& 50.0\\
$10^2$&     $-1.07 \times 10^{-3}$&	$-2.50 \times 10^{-5}$&	3.66&	32.8&	33.6& 43.8&	54.7& 64.2\\
\end{tabular}
\end{center}
\end{table}

\section{Conclusion}\label{sec:conclusion}
This paper proposed a PSD model reduction method to simplify large-scale forced Hamiltonian systems, which can achieve significant computational savings. Since the PSD reduced system preserves the forced Hamiltonian structure, it automatically satisfies the d'Alembert's principle. Since  d'Alembert's principle is the first principle  in classical mechanics, the PSD reduced system is a physical model, rather than merely a numerical model. In contrast, although POD can always reduce the dimensionality of a dynamical system, a POD reduced system may be or may not be physical, since there is no guarantee that the system can satisfy any fundamental physical laws.

Two structure-preserving approaches are developed in order to reconstruct reduced systems in a low-dimensional subspace, one based on the variational principle and the other on the structure-preserving projection. Both approaches can yield the same structure-preserving reduced system. By incorporating the vector field into the data ensemble, the PSD method also preserve the time derivative of system energy.
In a special case when the external force represents the Rayleigh dissipation,  PSD  automatically preserves the dissipativity. As a consequence, PSD also preserves the boundedness and Lyapunov stability under some conditions.

The stability, accuracy, and efficiency of the proposed method are illustrated through numerical simulations of the one dimensional dissipative  wave equation. However, PSD can have much more general applications. Once we choose canonical coordinates, all the systems that satisfy d'Alembert's principle can be written as  the forced Hamiltonian equation. As a result, PSD can be applied to any large-scale mechanical  system in principle. Finally, we should mention that the computational complexity and implementation complexity of PSD are almost identical to the complexity of POD. Since  the POD reduced system can be unstable and produces unpredictable results, we believe that PSD is more suited for model reduction of large-scale mechanical systems, especially when long-time integration is required.

\bibliographystyle{siam}
\bibliography{RefA2}

\end{document}